\newtheorem{theorem}{Theorem}
  \newtheorem{lem}[theorem]{Lemma}
  \newtheorem{prop}[theorem]{Proposition}
\def\bsuffix #1{#1}
\begin{document}
\begin{frontmatter}

\title{Maharam extension and stationary stable processes}
\runtitle{Maharam extension and stationary stable processes}

\begin{aug}
\author{\fnms{Emmanuel} \snm{Roy}\corref{}\ead[label=e1]{roy@math.univ-paris13.fr}}
\runauthor{E. Roy}
\affiliation{Universit\'{e} Paris}
\address{Laboratoire Analyse G\'{e}om\'{e}trie et Applications\\
UMR 7539\\
Universit\'{e} Paris 13\\
99 avenue J.-B. Cl\'{e}ment\\
F-93430 Villetaneuse\\
France\\
\printead{e1}} 
\end{aug}

\received{\smonth{1} \syear{2010}}
\revised{\smonth{3} \syear{2011}}

%
\begin{abstract}
We give a second look at stationary stable processes by interpreting
the self-similar property at the level of the L\'{e}vy measure as characteristic
of a Maharam system. This allows us to derive structural results and
their ergodic consequences.
\end{abstract}

%
\begin{keyword}[class=AMS]
\kwd[Primary ]{60G52}
\kwd{60G10}
\kwd{37A40}
\kwd[; secondary ]{37A50}.
\end{keyword}
\begin{keyword}
\kwd{Stable stationary processes}
\kwd{Maharam system}
\kwd{ergodic properties}.
\end{keyword}

\end{frontmatter}

\section{Introduction}

In a fundamental paper \cite{Ros95strucsta}, Rosi{\'n}ski
revealed the hidden structure of stationary symmetric $\alpha$-stable
($S\alpha S$) processes. Namely, he proved that, following Hardin \cite{Hard82mini}, through what is
called a \textit{minimal spectral
representation}, such a process is driven by a nonsingular dynamical
system.

Such a result was proved to classify those processes according to
their ergodic properties such as various kinds of mixing. In \cite{Roy06IDstat},
we used a different approach as we considered the whole family of
stationary infinitely divisible processes without Gaussian part (called
\textit{IDp processes}). The key tool there was the L\'{e}vy measure system
of the process, which was measure-preserving and not just merely nonsingular.
So far, in the stable case, the connection between the L\'{e}vy measure and the
nonsingular system was not clear. This is
the purpose of this paper, to fill the gap and go beyond both approaches.

Indeed, we will prove that L\'{e}vy measure systems of $\alpha$-stable
processes have the form of a so-called Maharam system. This observation
has some interesting consequences as it allows us to derive very quickly
minimal spectral representations in the $S\alpha S$ case, to reinforce
factorization results, and to refine ergodic classification.

Let us explain very loosely the mathematical features of stable distributions
we will be using. Observe that stable distributions are characterized
by a self-similar property which is obvious when observing the corresponding
L\'{e}vy process:

If $X_{t}$ is an $\alpha$-stable L\'{e}vy process, then $b^{-{1}/{\alpha}}X_{bt}$
has the same distribution.

However, if not obvious or useful, this property is also present for
any $\alpha$-stable object but takes another form. The common feature
is to be found in the L\'{e}vy measure:

Loosely speaking, if $\{ X_{t}\} _{t\in S}$ is an $\alpha$-stable
process indexed by a set $S$, then for any positive number $c$,
the image of the L\'{e}vy measure $Q$ by the map $R_{c}:=\{ x_{t}\}
_{t\in S}\mapsto\{ cx_{t}\} _{t\in S}$
is $c^{-\alpha}Q$.

This property of the L\'{e}vy measure is characteristic of $\alpha$-stable
processes and can be translated into an ergodic theoretic statement:

The measurable nonsingular flow $\{ R_{c}\} _{c\in\mathbb{R}_{+}}$
is dissipative and the multiplicative coefficient $c^{-\alpha}$ has
an outstanding importance in this matter, since it reveals the structure
of a Maharam transformation. The importance is even greater when there
is more invariance involved (stationary $\alpha$-stable processes,
etc$.\ldots$), as in the present paper.

The paper is organized as follows. In Section \ref{secSpectral-representation}
we recall what a spectral representation is, and in Section \ref{secSome-terminology},
we give the necessary background in nonsingular ergodic theory. Maharam
systems are introduced in Section \ref{secMaharam-transformation},
and the link with L\'{e}vy measures of stable processes, together with
spectral representations is exposed in Section \ref{secL=0000E9vy-measure-as}.
Section \ref{secRefinements-of-the} is a refinement of the structure
of stable processes. We deduce from the preceding results some ergodic
properties in Section \ref{secErgodic-properties}.

\section{Spectral representation}\label{secSpectral-representation}

We warn the reader that we will, most of the time, omit the implicit
``$\mu$-a.e.'' or ``modulo null sets'' throughout the document.

A family of functions $\{ f_{t}\} _{t\in T}\subset L^{\alpha}(\Omega
,\mathcal{F},\mu)$
where $(\Omega,\mathcal{F},\mu)$ is a $\sigma$-finite
Lebesgue space is said to be a spectral representation of $S\alpha S$
process $\{ X_{t}\} _{t\in T}$ if
\[
\{ X_{t}\} _{t\in T}=\biggl\{ \int_{\Omega}f_{t}(\omega)M(\mathrm{d}\omega)\biggr\}
_{t\in T}
\]
holds in distribution, $M$ being an independently scattered $S\alpha S$-random
measure on $(\Omega,\mathcal{F})$ with intensity measure
$\mu$.

We will say that a spectral representation is \textit{proper} if
$\operatorname{Supp}\{ f_{t}, t\in T\} =\Omega$.
Of course we obtain a proper representation from a general one by
removing the complement of $\operatorname{Supp}\{ f_{t}, t\in T\} $.

To express that a representation contains the strict minimum to define
the process, the notion of minimality has been introduced (Hardin
\cite{Hard82mini}):

A spectral representation is said to be $\{ f_{t}\} _{t\in T}\subset
L^{\alpha}(\Omega,\mathcal{F},\mu)$
\textit{minimal} if it is proper and $\sigma(\frac{f_{t}}{f_{s}}1_{\{
f_{s}\neq0\} }, s,t\in T)=\mathcal{F}$.

Hardin proved in \cite{Hard82mini} the existence of minimal representations
for $S\alpha S$ processes.

In the stationary case ($T=\mathbb{R} \mbox{ or } \mathbb{Z}$),
Rosi{\'n}ski
has explained the form of the spectral representation:
\begin{theorem}[(Rosi{\'n}ski)] Let $\{ f_{t}\} _{t\in T}\subset L^{\alpha
}(\Omega,\mathcal{F},\mu)$
be a minimal representation of a stationary $S\alpha S$-process;
then there exists nonsingular flow $\{ \phi_{t}\} _{t\in T}$
on $(\Omega,\mathcal{F},\mu)$ and a cocycle $\{ a_{t}\} _{t\in T}$
for this flow with values in $\{ -1,1\} $ (or in $|z|=1$
in the complex case) such that, for each $t\in T$,
\[
f_{t}=a_{t}\biggl\{ \frac{\mathrm{d}\mu\circ\phi_{t}}{\mathrm{d}\mu}\biggr\} ^{{1}/{\alpha}}(f_{0}\circ\phi_{t}).
\]

\end{theorem}

\section{Some terminology}\label{secSome-terminology}

A quadruplet $(\Omega,\mathcal{F},\mu,T)$ is called a
\textit{dynamical system} or shortly a \textit{system} if $T$ is a
\textit{nonsingular
automorphism}, that is, a bijective bi-measurable map such that $T^{*}\mu
\sim\mu$.
If $T_{*}(\mu)=\mu$, then $(\Omega,\mathcal{F},\mu,T)$
is a \textit{measure-preserving} (abr. \textit{m.p.}) dynamical system.

A system $(\Omega_{2},\mathcal{F}_{2},\mu_{2},T_{2})$
is said to be a \textit{nonsingular (resp. measure preserving) factor}
of the system $(\Omega_{1},\mathcal{F}_{1},\mu_{1},T_{1})$
if there exists a measurable \textit{nonsingular (resp. measure preserving)
homomorphism} between them, that is, a measurable map $\Phi$ from
$\Omega_{1}$ to $\Omega_{2}$ such that $\Phi T_{1}=T_{2}\Phi$ and
$\Phi^{*}\mu_{1}\sim\mu_{2}$ (resp. $\Phi^{*}\mu_{1}=\mu_{2}$).
If $\Phi$ is invertible and bi-measurable it is called a \textit{nonsingular
(resp. measure preserving) isomorphism}, and the system is said to
be \textit{nonsingular (resp. measure preserving) isomorphic}.

\subsection{Krieger types}

Consider a nonsingular dynamical system $(\Omega,\mathcal{F},\mu,T)$.
A set $A\in\mathcal{F}$ such that $\mu(A)>0$ is said
to be \textit{periodic} of period $n$ if $T^{i}A$, $0\leq i\leq n-1$,
are disjoint and $T^{n}A=A$ and \textit{wandering} if $T^{i}A$, $i\in
\mathbb{Z}$
are disjoint. A set is \textit{exhaustive} if $\bigcup_{k\in\mathbb
{Z}}T^{k}A=\Omega$.
A system is \textit{conservative} if there is no wandering set and
\textit{dissipative}
if there is an exhaustive wandering set.

$(\Omega,\mathcal{F},\mu,T)$ is said to be of \textit{Krieger
type}:
\begin{itemize}
\item$I_{n}$ if there exists an exhaustive set of period $n$;
\item$I_{\infty}$ if it is dissipative;
\item$\mathrm{II}_{1}$ if there is no periodic set and exists an equivalent
finite $T$-invariant measure;
\item$\mathrm{II}_{\infty}$ if is is conservative with an equivalent infinite
$T$-invariant continuous measure but no absolutely continuous finite
$T$-invariant measure;
\item$\mathrm{III}$ if there is no absolutely continuous $T$-invariant
measure.
\end{itemize}

\section{Maharam transformation}\label{secMaharam-transformation}
\begin{defn}
An m.p. dynamical system is said to be \textit{Maharam} if it is isomorphic
to $(\Omega\times\mathbb{R},\mathcal{F}\otimes\mathcal{B},\mu\otimes
\mathrm{e}^{s}\,\mathrm{d}s,\widetilde{T})$,
where $T$ is a nonsingular automorphism of $(\Omega,\mathcal{F},\mu)$,
and $\widetilde{T}$ is defined by
\[
\widetilde{T}(\omega,s):=\biggl(T(\omega),s-\ln\frac{\mathrm{d}T_{*}^{-1}\mu
}{\mathrm{d}\mu}(\omega)\biggr).
\]
\end{defn}
Observe that the dissipative flow $\{ \tau_{t}\} _{t\in\mathbb{R}}$
defined by $\tau_{t}:=(\omega,s)\mapsto(\omega,s-t)$
commutes with $\widetilde{T}$.

Note that we have chosen the usual additive representation but we
could (and eventually will!) use the following multiplicative representation
of a~Maharam system. Take $0<\alpha<2$. We can represent $(\Omega\times
\mathbb{R},\mathcal{F}\otimes\mathcal{B},\mu\otimes\mathrm{e}^{s}\,\mathrm
{d}s,\widetilde{T})$
by the system $(\Omega\times\mathbb{R}_{+}^{*},\mathcal{F}\otimes
\mathcal{B}_{+},\mu\otimes\frac{1}{s^{1+\alpha}}\,\mathrm{d}s,\widetilde
{T}_{\alpha})$,
where $\widetilde{T}_{\alpha}$ is defined by
\[
\widetilde{T}_{\alpha}(\omega,s):=\biggl(T(\omega),s\biggl(\frac{\mathrm
{d}T_{*}^{-1}\mu}{\mathrm{d}\mu}(\omega)\biggr)^{{1}/{\alpha}}\biggr).
\]

The isomorphism is provided by the map $(\omega,s)\mapsto(\omega
,(2-\alpha)^{-{1}/{(2-\alpha)}}{e}^{(2-\alpha)s})$.
Observe that, under this isomorphism, $\{ \tau_{t}\} _{t\in\mathbb{R}}$
is changed into $\{ S_{\mathrm{e}^{{t}/{\alpha}}}\} _{t\in\mathbb
{R}_{+}^{*}}$,
where~$S_{t}$ is the multiplication by $t$ on the second coordinate.

In \cite{AarLemVol98salad}, the authors proved the following characterization,
as a straightforward application of Krengel's representation of dissipative
transformations:

\begin{theorem}
\label{thmMaharam}A system $(X,\mathcal{A},\nu,\gamma)$
is Maharam if and only if there exists a measurable flow $\{ Z_{t}\}
_{t\in\mathbb{R}}$
commuting with $\gamma$ such that $(Z_{t})_{*}\nu=\mathrm{e}^{t}\nu$.
$\{ Z_{t}\} _{t\in\mathbb{R}}$ corresponds to $\{ \tau_{t}\} _{t\in
\mathbb{R}}$
under the isomorphism with the Maharam system under the additive representation.
\end{theorem}

In the original theorem they assumed ergodicity of $\gamma$ to prove
that the resulting nonsingular transformation $T$ in the above representation
was actually living on a nonatomic measure space $(\Omega,\mathcal
{F},\mu)$.
The ergodicity assumption is therefore not necessary in the way we
present this theorem.

We end this section with a very natural lemma which is part of folklore.
We omit the proof.
\begin{lem}
\label{lemTwoMaharam}Consider two Maharam systems $(\Omega_{1}\times
\mathbb{R}_{+}^{*},\mathcal{F}_{1}\otimes\mathcal{B},\mu_{1}\otimes\frac
{1}{s^{1+\alpha}}\,\mathrm{d}s,\allowbreak \widetilde{T_{1}})$
and $(\Omega_{2}\times\mathbb{R}_{+}^{*},\mathcal{F}_{2}\otimes\mathcal
{B},\mu_{2}\otimes\frac{1}{s^{1+\alpha}}\,\mathrm{d}s,\widetilde{T_{2}})$,
and denote by $\{ S_{t}\} _{t\in\mathbb{R}_{+}^{*}}$ and
$\{ Z_{t}\} _{t\in\mathbb{R}_{+}^{*}}$ their respective
multiplicative flows. Assume there exists a (measure-preserving) factor
map (resp. isomorphism) $\Phi$ between the two systems such that,
for all $t\in\mathbb{R}_{+}^{*}$, $S_{t}\Phi=Z_{t}\Phi$. Then $\Phi$
induces a nonsingular factor map (resp. isomorphism) $\phi$ between
$(\Omega_{1},\mathcal{F}_{1},\mu_{1},T_{1})$ and $(\Omega_{2},\mathcal
{F}_{2},\mu_{2},T_{1})$.
\end{lem}
\begin{rem}
Observe also that the Maharam systems associated to $(\Omega,\mathcal
{F},\allowbreak \mu_{1},  T)$
and $(\Omega,\mathcal{F},\mu_{2},T)$ where $\mu_{1}\sim\mu_{2}$
are isomorphic.
\end{rem}

\subsection{\texorpdfstring{Refinements of type $\mathrm{III}$ (see \cite{DanSilv09NonSi})}{Refinements of type $\mathrm{III}$
(see [3])}}\label{subType3}

Since the flow $\{ S_{t}\} _{t\in\mathbb{R}}$ commutes
with~$\widetilde{T}$, it acts nonsingularly on the space $(Z,\nu)$
of ergodic components of~$\widetilde{T}$ and is called the \textit{associated
flow} of $T$. This flow is ergodic whenever $T$ is ergodic, and its
form allows us to classify ergodic type $\mathrm{III}$ systems:
\begin{itemize}
\item$T$ is of type $\mathrm{III}_{\lambda}$, $0<\lambda<1$, if the associated
flow is the periodic flow $x\mapsto x+t
\operatorname{mod}(-\log\lambda)$;\vadjust{\goodbreak}
\item$T$ is of type $\mathrm{III}_{0}$, if the associated flow is free;
\item$T$ is of type $\mathrm{III}_{1}$, if the associated flow is the
trivial flow on a singleton.
\end{itemize}
In particular $\widetilde{T}$ is ergodic if and only if $T$ is of
type $\mathrm{III}_{1}$.\

\section{\texorpdfstring{L\'{e}vy measure as Maharam system
and spectral representations}{Levy measure as Maharam system
and spectral representations}}\label{secL=0000E9vy-measure-as}

\subsection{\texorpdfstring{L\'{e}vy measure of $\alpha$-stable processes}{Levy measure of alpha-stable processes}}

For simplicity we will only consider discrete time stationary processes.

Let us recall, following \cite{Maruyama70IDproc} (see also \cite{Roy06IDstat}),
that the L\'{e}vy measure of stationary IDp process $X$ of distribution
$\mathbb{P}$ is the shift-invariant $\sigma$-finite measure on $\mathbb
{R}^{\mathbb{Z}}$,
$Q$, such that $Q(0_{\mathbb{R}^{\mathbb{Z}}})=0$, $\int_{\mathbb
{R}^{\mathbb{Z}}}(x_{0}^{2}\wedge1)Q(\mathrm{d}\{ x_{n}\} _{n\in\mathbb
{Z}})<\infty$
and
\begin{eqnarray*}
& & \mathbb{E}\Biggl[\exp\Biggl(i\sum_{k=n_{1}}^{n_{2}}a_{k}X_{k}\Biggr)\Biggr]\\
& &\qquad{} =\exp\Biggl[\int_{\mathbb{R}^{\mathbb{Z}}}\Biggl(\exp\Biggl(i\sum
_{k=n_{1}}^{n_{2}}a_{k}x_{k}\Biggr)-1-i\sum
_{k=n_{1}}^{n_{2}}a_{k}c(x_{k})\Biggr)Q(\mathrm{d}\{ x_{n}\} _{n\in\mathbb{Z}})\Biggr]
\end{eqnarray*}
for any choice of $-\infty<n_{1}\le n_{2}<+\infty$, $\{ a_{k}\}
_{n_{1}\le n_{2}}\in\mathbb{R}^{n_{2}-n_{1}}$.

$c$ is defined by:
\begin{eqnarray*}
c(x)&=&-1 \qquad\mbox{if } x<-1;\\
c(x)&=&x \qquad\mbox{if } -1\leq x\leq1;\\
c(x)&=&1 \qquad\mbox{if } x>1.
\end{eqnarray*}

The system $(\mathbb{R}^{\mathbb{Z}},\mathcal{B}^{\otimes\mathbb{Z}},Q,S)$
where $S$ is the shift on $\mathbb{R}^{\mathbb{Z}}$ is called the
\textit{L\'{e}vy measure system} associated to the process $X$.

The $\alpha$-stable stationary processes, $0<\alpha<2$, are (see
Chapter 3 in \cite{Sato99LevPro}) completely characterized as those
IDp processes such that their L\'{e}vy measure satisfies
%
\begin{equation}
(R_{t})_{*}Q=t^{-\alpha}Q\label{eqStable}
\end{equation}
for any positive $t$, $R_{t}$ being the multiplication by $t$,
that is,
\[
\{ x_{n}\} _{n\in\mathbb{Z}}\mapsto\{ tx_{n}\} _{n\in\mathbb{Z}}.
\]

We also recall the fundamental result of Maruyama that allows to represent
any IDp process with L\'{e}vy measure $Q$ as a stochastic integral with
respect to a Poisson measure with intensity $Q$.
\begin{theorem}[(Maruyama representation \cite{Maruyama70IDproc})]\label{thmMaruyama-rep}
Let $\mathbb{P}$ be the distribution of a stationary IDp process
with L\'{e}vy measure $Q$ and $((\mathbb{R}^{\mathbb{Z}})^{*},(\mathcal
{B}^{\otimes\mathbb{Z}})^{*},Q^{*},S_{*})$
the Poisson measure over the L\'{e}vy measure system $(\mathbb
{R}^{\mathbb{Z}},\mathcal{B}^{\otimes\mathbb{Z}},Q,S)$.
Set $X_{0}$ as\vadjust{\goodbreak} $\{ x_{n}\} _{n\in\mathbb{Z}}\mapsto x_{0}$
and define, on $(\mathbb{R}^{\mathbb{Z}})^{*}$, the stochastic
integral $I(X_{0})$ as the limit in probability, as $n$
tends to infinity, of the random variables
\[
\nu\mapsto{ \int_{|X_{0}|>{1}/{n}}}X_{0}\,\mathrm{d}\nu-{ \int_{|X_{0}|>{1}/{n}}}c(X_{0})\,\mathrm{d}Q.
\]

Then the process $\{ I(X_{0})\circ S_{*}^{n}\} _{n\in\mathbb{Z}}$
has distribution $\mathbb{P}$.
\end{theorem}

\subsection{\texorpdfstring{L\'{e}vy measure as
Maharam system}{Levy measure as
Maharam system}}\label{subL=0000E9vy-measure-Mahram}
\begin{theorem}
\label{thmspecrep}Let $(\mathbb{R}^{\mathbb{Z}},\mathcal{B}^{\otimes
\mathbb{Z}},Q,S)$
be the L\'{e}vy measure system of an $\alpha$-stable stationary process.
Then there exists a~probability space $(\Omega,\mathcal{F},\mu)$,
a~nonsingular transformation $T$, a function $f\in L^{\alpha}(\mu)$
such that, if $M$ denotes the map $(\omega,t)\mapsto tf(\omega)$,
then the map $\Theta:=(\omega,t)\mapsto\{ M\circ\widetilde{T}_{\alpha
}^{n}(\omega,t)\} _{n\in\mathbb{Z}}$
yields an isomorphism of the Maharam system $(\Omega\times\mathbb
{R}_{+},\mathcal{F}\otimes\mathcal{B}_{+},\mu\otimes\frac{1}{s^{1+\alpha
}}\,\mathrm{d}s,\allowbreak\widetilde{T}_{\alpha})$
with $(\mathbb{R}^{\mathbb{Z}},\mathcal{B}^{\otimes\mathbb{Z}},Q,S)$.
\end{theorem}
\begin{pf}
First observe that Theorem \ref{thmMaharam} can be applied to $(\mathbb
{R}^{\mathbb{Z}},\mathcal{B}^{\otimes\mathbb{Z}},Q,S)$
since the measurable and (obviously) dissipative flow $\{ R_{\mathrm
{e}^{{t}/{\alpha}}}\} _{t\in\mathbb{R}}$
satisfies the hypothesis, thanks to equation (\ref{eqStable}). Therefore,
there exists an isomorphism $\Psi$ between the Maharam system $(\Omega
\times\mathbb{R}_{+},\mathcal{F}\otimes\mathcal{B}_{+},\mu\otimes\frac
{1}{s^{1+\alpha}}\,\mathrm{d}s,\widetilde{T}_{\alpha})$
and $(\mathbb{R}^{\mathbb{Z}},\mathcal{B}^{\otimes\mathbb{Z}},Q,S)$
for an appropriate nonsingular system $(\Omega,\mathcal{F},\mu,T)$.
Set $f:=\Psi(\omega,1)_{0}$ [i.e., $\Psi(\omega,1)_{0}$
is the $0$th coordinate of the sequence $\Psi(\omega,1)$],
and let us check that $f\in L^{\alpha}(\mu)$. Indeed,
as $Q$ is a L\'{e}vy measure, we have
\[
\int_{\mathbb{R}^{\mathbb{Z}}}x_{0}^{2}\wedge1Q(\mathrm{d}\{ x_{n}\}
_{n\in\mathbb{Z}})<\infty,
\]
but since $\Psi$ is an isomorphism and $\Psi(\omega,t)=\Psi\circ
S_{t}(\omega,1)=R_{t}\circ\Psi(\omega,1)=t\Psi(\omega,1)$,
we have
\begin{eqnarray*}
&&\int_{\mathbb{R}^{\mathbb{Z}}}x_{0}^{2}\wedge1Q(\mathrm{d}\{ x_{n}\}
_{n\in\mathbb{Z}})\\
&&\qquad  =  \int_{\Omega}\int_{\mathbb{R}_{+}}\Psi(\omega
,t)_{0}^{2}\wedge1\frac{1}{t^{\alpha+1}}\,\mathrm{d}t\mu(\mathrm{d}\omega),\\
&&\int_{\Omega}\int_{\mathbb{R}_{+}}(t^{2}\Psi(\omega,1)_{0}^{2})\wedge
1\frac{1}{t^{\alpha+1}}\,\mathrm{d}t\mu(\mathrm{d}\omega) \\
&&\qquad =  \biggl(\int_{\mathbb
{R}_{+}}z^{2}\wedge1\frac{1}{z^{\alpha+1}}\,\mathrm{d}z\biggr)\int_{\Omega}|\Psi
(\omega,1)_{0}|^{\alpha}\mu(\mathrm{d}\omega)
\end{eqnarray*}
after the change of variable $z:=t|\Psi(\omega,1)_{0}|$.
Therefore $\int_{\Omega}|\Psi(\omega,1)_{0}|^{\alpha}\mu(\mathrm{d}\omega
)<\infty$.
\end{pf}

In the symmetric case we can make the theorem more precise:
\begin{theorem}
\label{thmsymspecrep}Let $(\mathbb{R}^{\mathbb{Z}},\mathcal{B}^{\otimes
\mathbb{Z}},Q,S)$
be the L\'{e}vy measure system of a symmetric $\alpha$-stable
stationary\vadjust{\goodbreak}
process. Then there exists a probability space $(X,\mathcal{A},\nu)$,
a nonsingular transformation $R$, a function $f\in L^{\alpha}(\nu)$
and a measurable map $\xi\dvtx X\to\{ -1,1\} $ such that, if
$M$ denotes the map $(x,t)\mapsto tf(x)$, then
the map $(x,t)\mapsto\{ M\circ\overline{R_{\alpha}}^{n}(x,t)\} _{n\in
\mathbb{Z}}$
yields an isomorphism between $(X\times\mathbb{R}^{*},\mathcal{A}\otimes
\mathcal{B},\nu\otimes\frac{1}{|s|^{1+\alpha}}\,\mathrm{d}s,\overline
{T_{\alpha}})$
with $(\mathbb{R}^{\mathbb{Z}},\mathcal{B}^{\otimes\mathbb{Z}},Q,S)$,
$\overline{R_{\alpha}}$ being defined by $(x,t)\mapsto(Rx,\xi(x)t(\frac
{\mathrm{d}R_{*}^{-1}\mu}{\mathrm{d}\mu}(x))^{{1}/{\alpha}})$.
\end{theorem}
\begin{pf}
Start by applying Theorem \ref{thmspecrep} to the L\'{e}vy measure.

Observe that the symmetry involves the presence of a measure preserving
involution $I$, namely $I\{ x_{n}\} _{n\in\mathbb{Z}}=\{ -x_{n}\} _{n\in
\mathbb{Z}}$.
$I$ also preserves the L\'{e}vy measure of the process. Observe also
that $I$ commutes with the shift and with the flow $R_{t}$. Therefore
$\widetilde{I}:=\Theta^{-1}I\Theta$ is a measure preserving automorphism
of $(\Omega\times\mathbb{R}_{+}^{*},\mathcal{F}\otimes\mathcal{B},\mu
\otimes\frac{1}{s^{1+\alpha}}\,\mathrm{d}s,\widetilde{T}),$
and we can apply Lemma \ref{lemTwoMaharam} to deduce that $\widetilde{I}$
induces a nonsingular involution $\phi$ on $(\Omega,\mathcal{F},\mu,T)$.
It is standard that such transformation admits an equivalent finite
invariant measure, so, up to another measure preserving isomorphism,
we can assume that $\phi$ preserves the probability measure $\mu$.

Using Rohklin's structure theorem, the compact factor associated to
the compact group $\{ \operatorname{Id},\phi\} $ tells us that
we can represent $(\Omega,\mathcal{F},\mu,T)$ as $(X\times\{ -1,1\}
,\mathcal{A}\otimes\mathcal{P}\{ -1,1\} ,\nu\otimes m,R_{\xi})$,
where $R$ is a nonsingular automorphism of $(X,\mathcal{A},\nu)$,
$m$ is the uniform measure on $(\{ -1,1\} ,\mathcal{P}\{ -1,1\} )$,
$\xi$ a cocycle from $X$ to $\{ -1,1\} $ and $R_{\xi}:=(x,\varepsilon
)\mapsto(Rx,\xi(x)\varepsilon)$.

It is now clear that $(X\times\{ -1,1\} \times\mathbb
{R}_{+}^{*},(\mathcal{A}\otimes\mathcal{P}\{ -1,1\} )\otimes\mathcal
{B},\nu\otimes m\otimes\frac{1}{s^{1+\alpha}}\,\mathrm{d}s,\widetilde
{S_{\xi}})$
is isomorphic to $(X\times\mathbb{R}^{*},\mathcal{A}\otimes\mathcal
{B},\nu\otimes\frac{1}{|s|^{1+\alpha}}\,\mathrm{d}s,\overline{R_{\alpha}})$
thanks to the mapping $(x,\varepsilon,t)\mapsto(x,2^{{1}/{\alpha
}}\varepsilon t)$
and $\overline{R_{\alpha}}:=(x,t)\mapsto(Rx,\xi(x)(\frac{\mathrm
{d}R_{*}^{-1}\mu}{\mathrm{d}\mu}(x))^{{1}/{\alpha}}t)$.\vspace*{-3pt}
\end{pf}

\subsection{Spectral representation}

It is now very easy to derive spectral representations from the above
results. In particular, if $(\mathbb{R}^{\mathbb{Z}},\mathcal
{B}^{\otimes\mathbb{Z}},Q,S)$
is the L\'{e}vy measure system of an $S\alpha S$ stationary process,
under the notation of Theorem \ref{thmsymspecrep}, $(X,\mathcal{A},\nu)$
together with the function $f\in L^{\alpha}(\nu)$, the
cocycle $\phi$ and the nonsingular automorphism $T$ yields a spectral
representation of the process. Indeed, by building the Poisson measure
over $(X\times\mathbb{R}^{*},\mathcal{A}\otimes\mathcal{B},\nu\otimes
\frac{1}{|s|^{1+\alpha}}\,\mathrm{d}s,\overline{R_{\alpha}})$
and by applying to it, $f$ and $\xi$ (Theorem 3.12.2, page 156 in
\cite{SamTaq94Stable}), we recover the $S\alpha S$ process with L\'{e}vy
measure $Q$, which proves the validity of the spectral representation.
The minimality can be obtained without difficulty thanks to Proposition
2.2 in \cite{Ros06Minirep}.

\subsection{\texorpdfstring{Maharam systems as L\'{e}vy measure}{Maharam systems as Levy measure}}\label{subMaharam-L=0000E9vy}

We can ask if whether a Maharam system $(\Omega\times\mathbb
{R}_{+},\mathcal{F}\otimes\mathcal{B}_{+},\mu\otimes\frac{1}{s^{1+\alpha
}}\,\mathrm{d}s,\widetilde{T}_{\alpha})$
can be coded into a L\'{e}vy measure system of a stable process. We can
answer this question affirmatively in the only interesting case, that
is, when the Maharam system has no finite absolutely continuous
$\widetilde{T}_{\alpha}$-invariant
measure, that is, when the resulting L\'{e}vy measure system leads to
an ergodic stable process.\vadjust{\goodbreak}

Recall that a Maharam system $(\Omega\times\mathbb{R}_{+},\mathcal
{F}\otimes\mathcal{B}_{+},\mu\otimes\frac{1}{s^{1+\alpha}}\,\mathrm
{d}s,\widetilde{T}_{\alpha})$
has no finite absolutely continuous $\widetilde{T}_{\alpha}$-invariant
measure if and only if the nonsingular system $(\Omega,\mathcal{F},\mu,T)$
has the same property. But then a famous theorem of Krengel \cite{Kren70Gen}
shows that such a system possesses a $2$-generator, that is, there
exists a measurable function $f\dvtx \Omega\to\{ 0,1\} $, $\mu\{ f=1\}
<\infty$,
such that $\sigma\{ f\circ T^{n}, n\in\mathbb{Z}\} =\mathcal{F}$.

To be more precise, this means that, up to isomorphism, $(\Omega
,\mathcal{F},\mu,T)$
can be represented as $(\{ 0,1\} ^{\mathbb{Z}},\mathcal{B}(\{ 0,1\}
^{\mathbb{Z}}),\nu,S)$
for an appropriate measure $\nu$, where $S$ is the shift transformation.
Then the Maharam system can be represented as $(\{ 0,1\} ^{\mathbb
{Z}}\times\mathbb{R}_{+}^{*},\mathcal{B}(\{ 0,1\} ^{\mathbb{Z}})\otimes
\mathcal{B},\nu\otimes\frac{1}{s^{1+\alpha}}\,\mathrm{d}s,\widetilde
{S}_{\alpha})$.
But if $\varphi$ is the map $(\{ x_{n}\} _{n\in\mathbb{Z}},t)\mapsto\{
tx_{n}\} _{n\in\mathbb{Z}}$
and $Q=\varphi_{*}(\nu\otimes\frac{1}{s^{1+\alpha}}\,\mathrm{d}s)$,
we obtain a L\'{e}vy measure system $(\mathbb{R}^{\mathbb{Z}},\mathcal
{B}^{\otimes\mathbb{Z}}, Q,S)$
of an $\alpha$-stable system, as $\{ x_{n}\} _{n\in\mathbb{Z}}\mapsto\{
x_{0}\} $
is in $L^{\alpha}(\mu)$ (see the proof of Theorem \ref{thmspecrep}).
Moreover, the sequence $\{ y_{n}\} _{n\in\mathbb{Z}}$
takes only two values, $0$ or $\sup\{ y_{n}\} _{n\in\mathbb{Z}}$
$Q$-almost everywhere ($\{ y_{n}\} _{n\in\mathbb{Z}}$
can not be identically zero with positive measure as such a constant
sequence forms a shift-invariant set of finite measure); therefore,
$\varphi^{-1}$ exists and is defined by $\{ y_{n}\} _{n\in\mathbb
{Z}}\mapsto(\sup\{ y_{n}\} _{n\in\mathbb{Z}},\{ \frac{y_{n}}{\sup\{
y_{n}\} _{n\in\mathbb{Z}}}\} _{n\in\mathbb{Z}})$.\vspace*{-3pt}

$(\{ 0,1\} ^{\mathbb{Z}}\!\times\!\mathbb{R}_{+}^{*},\mathcal{B}(\{ 0,1\}
^{\mathbb{Z}})\!\otimes\!\mathcal{B},\nu\!\otimes\!\frac{1}{s^{1\!+\!\alpha}}\,\mathrm
{d}s,\widetilde{S}_{\alpha})$
is isomorphic to $(\mathbb{R}^{\mathbb{Z}},\mathcal{B}^{\otimes\mathbb
{Z}}, Q,S)$.

\section{Refinements of the representation}\label{secRefinements-of-the}

Ergodic stationary processes are building blocks of stationary processes;
prime numbers are the building blocks of integers; factors are building
blocks of Von Neumann algebras etc. What are the building blocks of
stationary infinitely divisible processes? Let us get more
precise.

Given a stationary ID process $X$, what are the solutions to the
equation (in distribution)
\[
X=X_{1}+X_{2},
\]
where $X_{1}$ and $X_{2}$ are independent stationary ID processes.
Of course, if $Q$ is the L\'{e}vy measure of $X$, then taking $X_{1}$
with L\'{e}vy measure $c_{1}Q$ and $X_{2}$ with L\'{e}vy measure $c_{2}Q$
with $c_{1}+c_{2}=1$ gives a solution. If these are the only solutions,
we said in \cite{Roy05these} that $X$ is \textit{pure}, meaning that
is impossible to reduce $X$ to ``simpler'' pieces. It was then
very easy to show that $X$ is pure if and only if its L\'{e}vy measure
is ergodic.\vspace*{-3pt}
\begin{prop}
A stationary IDp process $X$ is pure if and only if its L\'{e}vy measure
$Q$ is ergodic.\vspace*{-3pt}
\end{prop}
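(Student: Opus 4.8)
The plan is to characterize purity in terms of the ergodic decomposition of the $\sigma$-finite shift-invariant measure $Q$ on $\mathbb{R}^{\mathbb{Z}}$. The key observation is that the decomposition $X = X_1 + X_2$ of stationary ID processes corresponds, at the level of L\'evy measures, to writing $Q = Q_1 + Q_2$ as a sum of two shift-invariant $\sigma$-finite measures. This is because the L\'evy measure of a sum of independent ID processes is the sum of the individual L\'evy measures, and conversely any such decomposition of $Q$ into shift-invariant pieces produces two independent stationary ID processes summing to $X$ via Maruyama's representation (Theorem \ref{thm:Maruyama-rep}). So I would first establish the dictionary: decompositions $X = X_1 + X_2$ with $X_1, X_2$ independent and stationary are in bijection with decompositions $Q = Q_1 + Q_2$ into shift-invariant $\sigma$-finite measures.

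\textbf{From this dictionary to ergodicity.} Given the dictionary, purity of $X$ says that the only shift-invariant decompositions $Q = Q_1 + Q_2$ are the trivial ones $Q_i = c_i Q$ with $c_1 + c_2 = 1$. I would then argue that this rigidity is exactly ergodicity of the system $(\mathbb{R}^{\mathbb{Z}}, \mathcal{B}^{\otimes\mathbb{Z}}, Q, S)$. For the direction that ergodicity implies purity: if $Q = Q_1 + Q_2$ with both $Q_i$ shift-invariant and $Q_i \ll Q$, then each Radon--Nikodym derivative $\mathrm{d}Q_i/\mathrm{d}Q$ is a shift-invariant function, hence $Q$-a.e.\ constant by ergodicity, forcing $Q_i = c_i Q$. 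The reverse direction is the contrapositive: if $Q$ is not ergodic, there is a shift-invariant set $A$ with $0 < Q(A) < \infty$-relatively nontrivial, and the split $Q_1 = Q(\cdot \cap A)$, $Q_2 = Q(\cdot \cap A^c)$ gives a nontrivial shift-invariant decomposition, hence a nontrivial splitting of $X$, so $X$ is not pure.

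\textbf{The main obstacle.} The delicate point is the $\sigma$-finiteness and the handling of the Radon--Nikodym derivatives in the direction ``ergodic $\Rightarrow$ pure.'' One must verify that in a decomposition $Q = Q_1 + Q_2$ arising from a genuine ID splitting, both $Q_1$ and $Q_2$ are automatically absolutely continuous with respect to $Q$ (not merely dominated by a common measure), so that the derivatives $\mathrm{d}Q_i / \mathrm{d}Q$ exist and are well-defined $Q$-a.e.\ shift-invariant functions. Since $Q_i \le Q$ as measures, this absolute continuity is immediate, and the derivative lies in $[0,1]$; its shift-invariance follows from the shift-invariance of both $Q_i$ and $Q$. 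The remaining subtlety is purely about ensuring the constant value $c_i$ is the same across the (possibly infinite-mass) space, which is precisely what ergodicity of the $\sigma$-finite measure $Q$ delivers. I would also need to confirm that passing between the distributional equation $X = X_1 + X_2$ and the measure equation $Q = Q_1 + Q_2$ respects stationarity on both sides, which is clear since the shift on $\mathbb{R}^{\mathbb{Z}}$ simultaneously governs stationarity of the process and invariance of its L\'evy measure. This last bookkeeping, rather than any deep estimate, is where the care is required.
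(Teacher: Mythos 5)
Your proposal is correct and follows essentially the same route as the paper: the reverse direction splits $Q$ along a nontrivial shift-invariant set into $Q_{\mid A}+Q_{\mid A^{c}}$ and realizes the pieces as independent IDp processes, while the forward direction uses $Q_{1}\leq Q$, hence $Q_{1}\ll Q$, and ergodicity to force the shift-invariant density $\mathrm{d}Q_{1}/\mathrm{d}Q$ to be constant. Your additional bookkeeping (the dictionary via Maruyama's representation, the density lying in $[0,1]$) is just a more explicit rendering of the paper's terser argument.
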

\begin{pf}
Assume Q is not ergodic. There exists a partition of $\mathbb
{R}^{\mathbb{Z}}$
into two shift invariant sets $A$ and $B$ both of positive measure.
Therefore, $Q_{\mid A}$ and $Q_{\mid B}$ can be taken as L\'{e}vy measures
of two stationary IDp processes $X_{A}$ and $X_{B}$ and taking them
independently leads to
\[
X=X_{A}+X_{B}
\]
in distribution, as $Q=Q_{\mid A}+Q_{\mid B}$.\vadjust{\goodbreak}

In the converse, assume $Q$ is ergodic, and suppose there exist independent
stationary IDp processes $X_{1}$ and $X_{2}$ with L\'{e}vy measure $Q_{1}$
and $Q_{2}$ such that
\[
X=X_{1}+X_{2}
\]
holds in distribution. As $Q=Q_{1}+Q_{2}$ we get $Q_{1}\ll Q$. But
as $Q$ is ergodic, this in turns implies that there exists $c>0$
such that $Q_{1}=cQ$ and thus $Q_{2}=(1-c)Q$.
\end{pf}

In this section, we will try to comment the above equation according
to the Krieger type of the associated nonsingular transformation.
A description of the interesting class of those stable processes driven
by nonsingular transformations of type $\mathrm{III}_{0}$ is unknown.

\subsection{The type $\mathrm{III}_{1}$ case, pure stable processes}

It was an open question whether there exist pure stable processes.
It can now be solved thanks to the Maharam structure of the L\'{e}vy measure:
an $\alpha$-stable process is pure if and only if the underlying
nonsingular system is of type $\mathrm{III}_{1}$.

The existence of pure stable processes (guaranteed by the comments
made in Section \ref{subMaharam-L=0000E9vy}) is reassuring as it
validates the specific study of stable processes.

\subsection{\texorpdfstring{The type $\mathrm{III}_{\lambda}$ case, $0<\lambda<1$}{The type III_lambda case, 0<lambda<1}}

In this section we derive the form of those $\alpha$-stable processes
associated with an ergodic, type $\mathrm{III}_{\lambda}$ nonsingular
automorphism, $0<\lambda<1$.

\subsubsection{Semi-stable stationary processes}

An infinitely divisible probability measure $\mu$ on $\mathbb{R}^{d}$
is called \textit{$\alpha$-semi-stable} with \textit{span $b$} if its
Fourier transform satisfies
\[
\hat{\mu}(z)^{b^{\alpha}}=\hat{\mu}(bz)\mathrm{e}^{i\langle c,z\rangle}
\]
for some $c\in\mathbb{R}^{d}$.

By extension, an $\alpha$-semi-stable process process is a process
whose finite-dimensional distributions are $\alpha$-semi-stable.
Using once again results of Chapter~3 in \cite{Sato99LevPro}, one
gets the following characterization of $\alpha$-semi-stable stationary
processes:

A shift-invariant L\'{e}vy measure $Q$ on $(\mathbb{R}^{\mathbb
{Z}},\mathcal{B}^{\otimes\mathbb{Z}})$
is the L\'{e}vy measure of an $\alpha$-semi-stable stationary process
of span $b>0$ if and only if it satisfies
\[
(R_{b})_{*}Q=b^{-\alpha}Q,
\]
where $R_{b}$ is the multiplication by $b$
\[
\{ x_{n}\} _{\in\mathbb{Z}}\mapsto\{ bx_{n}\} _{\in\mathbb{Z}}.
\]
Of course by iterating $R_{b}$, we easily observe that
$(R_{b^{n}})_{*}Q=b^{-n\alpha}Q$
for all $n\in\mathbb{Z}$.

\subsubsection{Discrete Maharam extension}

Assume $(\Omega,\mathcal{F},\mu,T)$ is a nonsingular
system such that there exists $\lambda>0$ so that $\frac{\mathrm
{d}T_{*}^{-1}\mu}{\mathrm{d}\mu}\in\{ \lambda^{n}, n\in\mathbb{Z}\} $
$\mu$-almost everywhere. We can form its \textit{discrete Maharam extension},
that is, the m.p. system $(\Omega\times\mathbb{Z},\mathcal{F}\otimes
\mathcal{B},\mu\otimes\lambda^{n}\,\mathrm{d}n,\widetilde{T})$
where $\lambda^{n}\,\mathrm{d}n$ stands for the measure ${ \sum_{n\in\mathbb
{Z}}}\lambda^{n}\delta_{n}$
on $(\mathbb{Z},\mathcal{B})$ and $\widetilde{T}$ is
defined by
\[
\widetilde{T}(\omega,n)=\biggl(T\omega,n-\log_{\lambda}\frac{\mathrm
{d}T_{*}^{-1}\mu}{\mathrm{d}\mu}(\omega)\biggr).
\]

\subsubsection{\texorpdfstring{Ergodic decomposition of Maharam extension of type
$\mathrm{III}_{\lambda}$
transformations}{Ergodic decomposition of Maharam extension of type
III_lambda
transformations}}

Let $(\Omega,\mathcal{F},\mu,T)$ be an ergodic type $\mathrm
{III}_{\lambda}$
system. Up to a change of measure we can assume that the Radon--Nykodim
derivative take its values in the group $\{ \lambda^{n}, n\in\mathbb
{Z}\} $
where $r(T)=\{ 0,\lambda^{n}, n\in\mathbb{Z},\mathrm{+\ensuremath{\infty
}}\} $
is the ratio set of~$T$ (see \cite{KatWei91nonsing}). Therefore,
the discrete Maharam extension $(\Omega\times\mathbb{Z},\mathcal
{F}\otimes\mathcal{B},\beta\mu\otimes\lambda^{n}\,\mathrm{d}n,\widetilde{T})$,
where $\beta:=\int_{0}^{-\ln\lambda}\mathrm{e}^{-s}\,\mathrm{d}s$, exists.
Now form the product system
\[
\biggl(\Omega\times\mathbb{Z}\times[0,-\ln\lambda[,\mathcal{F}\otimes\mathcal
{B}\otimes\mathcal{B}([0,-\ln\lambda[),\beta\mu\otimes\lambda^{n}\,\mathrm
{d}n\otimes\frac{\mathrm{e}^{-s}}{\beta}\mathrm{ds},\widetilde{T}\times\operatorname{Id}\biggr).
\]
The dissipative nonsingular flow $S_{t}\dvtx (\omega,n,s)\mapsto(\omega
,n+\lfloor\frac{s-t}{-\ln\lambda}\rfloor,s-t+\break \ln\lambda\lfloor\frac
{s-t}{-\ln\lambda}\rfloor)$
satisfies $S_{t}\circ\widetilde{T}\times\operatorname{Id}=\widetilde
{T}\times\operatorname{Id}\circ S_{t}$
and $(S_{t})_{*}\mu\otimes\lambda^{n}\,\mathrm{d}n\otimes\mathrm{e}^{-s}\,\mathrm
{ds}=\mathrm{e}^{-t}\mu\otimes\lambda^{n}\,\mathrm{d}n\otimes\,\mathrm
{e}^{-s}\mathrm{ds}$,
and it is very easy to see that $(\mathbb{Z}\times[0,-\ln\lambda
[,\mathcal{B}\otimes\mathcal{B}([0,-\ln\lambda[),\lambda^{n}\,\mathrm
{d}n\otimes\mathrm{e}^{-s}\,\mathrm{ds})$
is just a reparametrization of $(\mathbb{R},\mathcal{B},\mathrm
{e}^{s}\,\mathrm{d}s)$,
thanks to the mapping $(n,s)\mapsto-n\ln\lambda-s$.

Therefore $(\Omega\times\mathbb{Z}\times[0,-\ln\lambda[,\mathcal
{F}\otimes\mathcal{B}\otimes\mathcal{B}([0,-\ln\lambda[),\mu\otimes
\lambda^{n}\,\mathrm{d}n\otimes\mathrm{e}^{-s}\,\mathrm{ds},\widetilde{T}\times
\operatorname{Id})$
can be seen as the Maharam extension of $(\Omega,\mathcal{F},\mu,T)$.

It remains to prove the ergodicity of $(\Omega\times\mathbb{Z},\mathcal
{F}\otimes\mathcal{B},\beta\mu\otimes\lambda^{n}\,\mathrm{d}n,\widetilde{T})$.
This follows, for example, from Corollary 5.4 in \cite{Schmidt77Cocycles},
as the ratio set is precisely the set of essential values of the Radon--Nykodim
cocycle.

We then obtain the ergodic decomposition of the Maharam extension:
it is the discrete Maharam extension $(\Omega\times\mathbb{Z},\mathcal
{F}\otimes\mathcal{B},\beta\mu\otimes\lambda^{n}\,\mathrm{d}n,\widetilde{T})$
randomized by the measure $\frac{\mathrm{e}^{-s}}{\beta}\,\mathrm{ds}$
on $[0,-\ln\lambda[$.

\subsubsection{Application to stable processes}

Let $(\mathbb{R}^{\mathbb{Z}},\mathcal{B}^{\otimes\mathbb{Z}},Q,S)$
be the L\'{e}vy measure system of an $\alpha$-stable process driven by
an ergodic type $\mathrm{III}_{\lambda}$ system $(\Omega,\mathcal{F},\mu,T)$,
and let $f\in L^{\alpha}(\mu)$ be given as in Theorem
\ref{thmspecrep}. Let $b>1$ so that $b^{-\alpha}=\lambda$, we
need to obtain a multiplicative version of the above structure adapted
to our parameters. Up to a change of measure we can assume that $(\frac
{\mathrm{d}T_{*}^{-1}\mu}{\mathrm{d}\mu})^{{1}/{\alpha}}\in\{ b^{n},
n\in\mathbb{Z}\} $
$\mu$-almost everywhere. Consider the discrete Maharam extension
$(\Omega\times G_{b},\mathcal{F}\otimes\mathcal{B},\beta\mu\otimes
m_{b},\widetilde{T})$
(in a multiplicative representation) where $\beta=\int_{1}^{b}\frac
{1}{s^{1+\alpha}}\,\mathrm{d}s$,
$m_{b}$ is the measure ${ \sum_{g\in G_{b}}}g^{-\alpha}\delta_{g}$
on $G_{b}:=\{ b^{n}, n\in\mathbb{Z}\} $ and\vadjust{\goodbreak} $\widetilde{T}:=(\omega
,g)\mapsto(T\omega,g(\frac{\mathrm{d}T_{*}^{-1}\mu}{\mathrm{d}\mu}(\omega
))^{{1}/{\alpha}})$.
Form\vfill\eject the system $(\mathbb{R}^{\mathbb{Z}},\mathcal{B}^{\otimes\mathbb
{Z}},Q^{r},S)$
as a factor of $(\Omega\times G_{b},\mathcal{F}\otimes\mathcal{B},\beta
\mu\otimes m_{b},\widetilde{T})$
given by the mapping $\varphi:=(\omega,g)\mapsto\{ M\circ\widetilde
{T}^{n}(\omega,g)\} _{n\in\mathbb{Z}}$
where $M(\omega,g)=gf(\omega)$ and $Q^{r}=\varphi_{*}(\beta\mu\otimes m_{b})$.

Now, as above, we recover the Maharam extension of $(\Omega,\mathcal
{F},\mu,T)$
by considering $(\Omega\times G_{b}\times[1,b[,\mathcal{F}\otimes
\mathcal{B}\otimes\mathcal{B}([1,b[),\beta\mu\otimes m_{b}\otimes\frac
{1}{\beta s^{1+\alpha}}\,\mathrm{d}s,\widetilde{T}\times\operatorname{Id})$.
As the system $(G_{b}\times[1,b[,\mathcal{B}\otimes\mathcal
{B}([1,b[),m_{b}\otimes\frac{1}{s^{1+\alpha}}\,\mathrm{d}s)$
is isomorphic to $(\mathbb{R}_{+}^{*},\mathcal{B},\frac{1}{s^{1+\alpha
}}\,\mathrm{d}s)$
thanks to $(g,t)\mapsto gt$, we obtain $(\mathbb{R}^{\mathbb
{Z}},\mathcal{B}^{\otimes\mathbb{Z}},Q,S)$
by applying the map\break $(\{ x_{n}\} _{n\in\mathbb{Z}},t)\mapsto\{ tx_{n}\}
_{n\in\mathbb{Z}}$
to $(\mathbb{R}^{\mathbb{Z}}\times[1,b[,\mathcal{B}^{\otimes\mathbb
{Z}}\otimes\mathcal{B}([1,b[),Q^{r}\otimes\frac{1}{\beta s^{1+\alpha
}}\,\mathrm{d}s,S\times\operatorname{Id})$.
At last, we can check that $Q^{r}$ is a L\'{e}vy measure, and indeed we know
that
\[
\int_{\mathbb{R}^{\mathbb{Z}}}(x_{0}^{2}\wedge1)Q(\mathrm{d}\{ x_{n}\}
_{n\in\mathbb{Z}})<+\infty,
\]
but
\[
\int_{\mathbb{R}^{\mathbb{Z}}}(x_{0}^{2}\wedge1)Q(\mathrm{d}\{ x_{n}\}
_{n\in\mathbb{Z}})=\int_{1}^{b}\biggl(\int_{\mathbb{R}^{\mathbb
{Z}}}\bigl((sx_{0})^{2}\wedge1\bigr)Q^{r}(\mathrm{d}\{ x_{n}\} _{n\in\mathbb
{Z}})\biggr)\frac{1}{\beta s^{1+\alpha}}\,\mathrm{d}s;
\]
therefore, for some $1\leq s<b$, $\int_{\mathbb{R}^{\mathbb
{Z}}}((sx_{0})^{2}\wedge1)Q^{r}(\mathrm{d}\{ x_{n}\} _{n\in\mathbb
{Z}})<+\infty,$
and this is enough to prove that $Q^{r}$ is a L\'{e}vy measure.

$(\mathbb{R}^{\mathbb{Z}},\mathcal{B}^{\otimes\mathbb{Z}},Q^{r},S)$
is the L\'{e}vy measure system of an $\alpha$-semi-stable stationary
process with span $b$. Heuristically, if $X$ has L\'{e}vy measure $Q$,
$X$ can be thought of as the continuous sum of independent processes
$Y^{t}$, $1\leq t<b$ weighted by the probability measure $\frac
{1}{\beta s^{1+\alpha}}\,\mathrm{d}s$
where $\frac{1}{t}Y^{t}$ has L\'{e}vy measure $Q^{r}$. More formally,
if $((\Omega\times G_{b}\times[1,b[)^{*},(\mathcal{F}\otimes\mathcal
{B}\otimes\mathcal{B}([1,b[))^{*},(\beta\mu\otimes m_{b}\otimes\frac
{1}{\beta s^{1+\alpha}}\,\mathrm{d}s)^{*},(\widetilde{T}\times\operatorname{Id})_{*})$
denotes the Poisson suspension over $(\Omega\times G_{b}\times
[1,b[,\mathcal{F}\otimes\mathcal{B}\otimes\mathcal{B}([1,b[),\beta\mu
\otimes m_{b}\otimes\frac{1}{\beta s^{1+\alpha}}\,\mathrm{d}s,\widetilde
{T}\times\operatorname{Id})$,
then, if $I$ denotes the stochastic integral as in Theorem~\ref{thmMaruyama-rep},
$X:=\{ I\{ M_{1}\} \circ(\widetilde{T}\times\operatorname{Id})_{*}^{n}\} _{n\in
\mathbb{Z}}$
has L\'{e}vy measure $Q$ and $Y:=\{ I\{ M_{2}\} \circ(\widetilde
{T}\times\operatorname{Id})_{*}^{n}\} _{n\in\mathbb{Z}}$
where $M_{1}(\omega,g,s)=sgf(\omega)$ and $M_{2}(\omega,g,s)=gf(\omega)$.

We therefore observe that $X$ is entirely determined by a pure $\alpha
$-semi-stable
stationary process with span $b$, $Y$. It is very easy to see that
$X$ and~$Y$ share the same type of mixing.\vspace*{-3pt}

\subsubsection{Examples}

It is not difficult to exhibit examples of stable processes of the
kind described above, as the structure detailed allows us to build such
processes. We can, for example, consider the systems $T_{p}$, $\frac{1}{2}<p<1$
introduced in~\cite{HajKakIt72invmeas}. We will follow the presentation
given in (\cite{Aar97InfErg}, page 104).

Let $\Omega$ be the group of dyadic integers, let $\tau$ acts by
translation by $\underline{1}$ on $\Omega$ and for $\frac{1}{2}\!<\!p\!<\!1$,
let $\mu_{p}$ be a probability measure on $\Omega$ defined on cylinders~by
\[
\mu_{p}([\varepsilon_{1},\ldots,\varepsilon_{n}])=\prod_{k=1}^{n}p(\varepsilon_{k}),
\]
where $p(0)=1-p$ and $p(1)=p$.\vadjust{\goodbreak}

If we set $\frac{1-p}{p}=\lambda$, we get
\[
\frac{\mathrm{d}\tau_{*}^{-1}\mu_{p}}{\mathrm{d}\mu_{p}}=\lambda^{\phi},
\]
where $\phi(x)=\min\{ n\in\mathbb{N}, x_{n}=0\} -2$.
It is proved in \cite{HajKakIt72invmeas} that the discrete Maharam
extension $(\Omega\otimes\mathbb{Z},\mathcal{F}\otimes\mathcal{B},\mu
\otimes\lambda^{n}\,\mathrm{d}n,\widetilde{\tau})$
is ergodic.

We can form a new system, which will be the L\'{e}vy measure system of
a~stationary semi-stable process with span $\lambda^{\alpha}$, thanks
to the following map:
\[
f\dvtx (\omega,n)\mapsto\lambda^{\alpha n}\sum_{i\ge1}\omega_{i}2^{-i}.
\]

The L\'{e}vy measure $Q^{r}$ is the image of $\mu\otimes\lambda^{n}\,\mathrm{d}n$
by the map $(\omega,n)\mapsto\{ f\circ\widetilde{\tau}^{k}(\omega,n)\}
_{k\in\mathbb{Z}}$.

By randomizing this L\'{e}vy measure as explained above, we obtain the
L\'{e}vy measure $Q$ of a stationary $\alpha$-stable process; that is,
$Q$ is the image measure of $Q^{r}\otimes\frac{1}{\beta s^{1+\alpha
}}\,\mathrm{d}s$
by the map
\[
(\{ x_{n}\} _{n\in\mathbb{Z}},t)\mapsto\{ tx_{n}\} _{n\in\mathbb{Z}}.
\]

To obtain a realization of these two processes as stochastic integrals
over Poisson suspensions, we can proceed as explained at the end of
the preceding section.

Anticipating the next sections, we derive the ergodic properties of these
processes:

$\tau$ being of type $\mathrm{III}_{\lambda}$, the Maharam extension
is of type $\mathrm{II}_{\infty}$ which means that the L\'{e}vy measure
system of the corresponding stationary $\alpha$-stable process (with
L\'{e}vy measure $Q$) is of type $\mathrm{II}_{\infty}$. Therefore the
associated Poisson suspension is weakly mixing. As stochastic integrals
with respect to this Poisson suspension, both processes (with L\'{e}vy
measures $Q$ and $Q^{r}$) are weakly mixing.

Thanks to (Lemma 1.2.10, page 30 in \cite{Aar97InfErg}), $\tau$ is
rigid for the sequence $\{ 2^{n}\} _{n\in\mathbb{N}}$.
Therefore, by Theorem \ref{thmRigidStable} (or with a slight adaptation
for the semi-stable case), both processes are also rigid for the same
sequence.

\subsection{The type $\mathrm{I}$ and $\mathrm{II}$ cases}

This case is easy to deal with as we can assume the associated ergodic
nonsingular system is actually measure preserving, that is, the L\'{e}vy
measure system $(\mathbb{R}^{\mathbb{Z}},\mathcal{B}^{\otimes\mathbb{Z}},Q,S)$
is isomorphic to $(\Omega\times\mathbb{R}_{+}^{*},\mathcal{F}\otimes
\mathcal{B},\mu\otimes\frac{1}{s^{1+\alpha}}\,\mathrm{d}s,\widetilde{T})$
where $T$ preserves $\mu$ and $\widetilde{T}$ acts as $T\times\operatorname{Id}$,
that is, $\widetilde{T}(\omega,t)=(T\omega,t)$.
Considering $f\in L^{\alpha}(\mu)$ furnished by Theorem
\ref{thmspecrep}, $(\Omega\times\mathbb{R}_{+}^{*},\mathcal{F}\otimes
\mathcal{B},\mu\otimes\frac{1}{s^{1+\alpha}}\,\mathrm{d}s,\widetilde{T})$
is isomorphic to $(\mathbb{R}^{\mathbb{Z}}\times\mathbb
{R}_{+}^{*},\mathcal{B}^{\otimes\mathbb{Z}}\otimes\mathcal
{B},Q^{s}\otimes\frac{1}{s^{1+\alpha}}\,\mathrm{d}s,S\times\operatorname{Id})$
through the map $(\omega,t)\mapsto(\{ f\circ T^{n}(\omega)\} _{n\in
\mathbb{Z}},t)$
and
\begin{eqnarray*}
\int_{\mathbb{R}^{\mathbb{Z}}}(x_{0}^{2}\wedge1)Q(\mathrm{d}\{ x_{n}\}
_{n\in\mathbb{Z}}) & = & \int_{\Omega}\int_{\mathbb{R}_{+}}\bigl((tf(\omega
))^{2}\wedge1\bigr)\frac{1}{t^{\alpha+1}}\,\mathrm{d}t\mu(\mathrm{d}\omega)\\
& = & \int_{\mathbb{R}_{+}}\biggl(\int_{\mathbb{R}^{\mathbb
{Z}}}\bigl((tx_{0})^{2}\wedge1\bigr)Q^{s}(\mathrm{d}\{ x_{n}\} _{n\in\mathbb
{Z}})\biggr)\frac{1}{t^{\alpha+1}}\,\mathrm{d}t\\
&<&+\infty.
\end{eqnarray*}

For the same reason as above, $Q^{s}$ is a L\'{e}vy measure. We draw the
same conclusions as in the preceding section, taking into account that
the weight is now the infinite measure $\frac{1}{t^{\alpha+1}}\,\mathrm{d}t$
on $\mathbb{R}_{+}^{*}$, and $Q^{s}$ can be any L\'{e}vy measure (of
a stationary IDp process).

\section{Ergodic properties}\label{secErgodic-properties}

Some ergodic properties of general IDp stationary processes have been
given in terms of ergodic properties of the L\'{e}vy measure system in
\cite{Roy06IDstat}. For an $\alpha$-stable stationary processes,
it is more interesting to give them in terms of the associated nonsingular
system $(\Omega,\mathcal{F},\mu,T)$. This work has been
undertaken in the symmetric ($S\alpha S$) case in a series of papers
(see, in particular, \cite{ros96mixsta} and \cite{Samo04Posnul}).

We have a new tool to deal with this problem:

As the L\'{e}vy measure of an $\alpha$-stable stationary processes can
now be seen as the Maharam extension $(\Omega\times\mathbb
{R}_{+}^{*},\mathcal{F}\otimes\mathcal{B},\mu\otimes\frac{1}{s^{1+\alpha
}}\,\mathrm{d}s,\widetilde{T})$
of the system $(\Omega,\mathcal{F},\mu,T)$, it suffices
to connect ergodic properties of $T$ and $\widetilde{T}$, and then apply
the general results relating ergodic properties of a stationary IDp
process with respect to those of its L\'{e}vy measure system.

Observe that linking ergodic properties of $T$ and $\widetilde{T}$
is a general problem in nonsingular ergodic theory which is of great
interest.

We will illustrate this in the following sections dealing with mixing,
$K$-property and rigidity, the last two having been neglected in
the $\alpha$-stable literature.

\subsection{Mixing}

First recall that if $S$ is a nonsingular transformation of a~measure
space $(X,\mathcal{A},m)$, it induces a unitary operator
$U_{S}$ on $L^{2}(m)$ by
\[
U_{S}f(x)=\sqrt{\frac{\mathrm{d}S_{*}^{-1}\mu}{\mathrm{d}\mu}(x)}f\circ S(x).
\]
We first give a general result:
\begin{prop}
The Maharam system $(\Omega\times\mathbb{R}_{+}^{*},\mathcal{F}\otimes
\mathcal{B}_{+},\mu\otimes\frac{1}{s^{1+\alpha}}\,\mathrm{d}s,\widetilde
{T}_{\alpha})$
is of zero type\textup{\textit{ if and only }}if for all $f\in L^{2}(\mu)$,
$\langle U_{T}^{n}f,f\rangle_{L^{2}(\mu)}\to0$
as $n$ tends to infinity.
\end{prop}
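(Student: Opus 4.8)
The plan is to reduce both conditions to correlation statements about indicator functions and then to compare the resulting weighted sums by elementary estimates. First I would observe that, since $\widetilde{T}_{\alpha}$ preserves $\nu:=\mu\otimes\frac{1}{s^{1+\alpha}}\,\mathrm{d}s$, its Koopman operator on $L^{2}(\nu)$ is the plain composition operator, so zero type means $\langle U_{\widetilde{T}_{\alpha}}^{n}F,G\rangle\to0$ for all $F,G\in L^{2}(\nu)$. As these correlations are bounded by $\|F\|\,\|G\|$ uniformly in $n$, and products $1_{A\times[c,d]}$ span a dense subspace, it suffices to test against $F=G=1_{A\times[c,d]}$ with $A\in\mathcal{F}$ and $0<c<d$. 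A direct computation gives
\[
\langle U_{\widetilde{T}_{\alpha}}^{n}1_{A\times[c,d]},1_{A\times[c,d]}\rangle=\int_{A\cap T^{-n}A}\kappa\bigl(\rho_{n}(\omega)\bigr)\,\mu(\mathrm{d}\omega),
\]
where $\rho_{n}:=\frac{\mathrm{d}(T^{n})_{*}^{-1}\mu}{\mathrm{d}\mu}$ and $\kappa(\rho):=\int_{[c,d]\cap\rho^{-1/\alpha}[c,d]}\frac{1}{s^{1+\alpha}}\,\mathrm{d}s$ (here $\rho^{-1/\alpha}[c,d]$ is the dilate of $[c,d]$). The weight $\kappa$ is bounded by $C_{c,d}:=\int_{c}^{d}\frac{1}{s^{1+\alpha}}\,\mathrm{d}s$ and, crucially, is supported in the compact interval $[(c/d)^{\alpha},(d/c)^{\alpha}]\subset(0,\infty)$. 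On the other side, for indicators the stated condition reads $R_{n}(A):=\langle U_{T}^{n}1_{A},1_{A}\rangle=\int_{A\cap T^{-n}A}\rho_{n}^{1/2}\,\mathrm{d}\mu\to0$, and the same density argument shows this is equivalent to $\langle U_{T}^{n}f,f\rangle\to0$ for all $f\in L^{2}(\mu)$.

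For the implication from the $T$-condition to zero type I would simply dominate the bounded, compactly supported weight by the unbounded one: on the support of $\kappa$ one has $\rho_{n}\ge(c/d)^{\alpha}$, hence $\kappa(\rho_{n})\le C_{c,d}\,(d/c)^{\alpha/2}\rho_{n}^{1/2}$, so the Maharam correlation is bounded by a constant times $R_{n}(A)$ and therefore tends to $0$.

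The reverse implication is the delicate one, and I expect it to be the main obstacle, because there I must control the \emph{unbounded} weight $\rho_{n}^{1/2}$ by the bounded bumps $\kappa$, which only see a compact range of $\rho_{n}$. I would split $R_{n}(A)$ according to the size of $\rho_{n}$. The two tails are handled uniformly in $n$ using only that $\mu$ is a probability measure: on $\{\rho_{n}\ge\Lambda\}$ one bounds $\rho_{n}^{1/2}\le\Lambda^{-1/2}\rho_{n}$ and invokes $\int_{A\cap T^{-n}A}\rho_{n}\,\mathrm{d}\mu\le\int_{\Omega}\rho_{n}\,\mathrm{d}\mu=1$, giving a contribution $\le\Lambda^{-1/2}$, while on $\{\rho_{n}\le\lambda\}$ the contribution is $\le\lambda^{1/2}\mu(A)$. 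For the middle range $\{\lambda\le\rho_{n}\le\Lambda\}$ I would choose $[c,d]$ with $d/c$ large enough that $\kappa\ge\kappa_{0}>0$ throughout $[\lambda,\Lambda]$ (possible since $\kappa$ is continuous and strictly positive on the interior of its support, which exhausts $(0,\infty)$ as $d/c\to\infty$); there $\rho_{n}^{1/2}\le\Lambda^{1/2}\kappa_{0}^{-1}\kappa(\rho_{n})$, so the corresponding integral is $\le\Lambda^{1/2}\kappa_{0}^{-1}$ times the Maharam correlation, which vanishes by hypothesis. Letting first $n\to\infty$, then $\Lambda\to\infty$ and $\lambda\to0$, forces $R_{n}(A)\to0$. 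The point to get right is precisely this comparison; the uniform tail estimate from $\int_{A\cap T^{-n}A}\rho_{n}\,\mathrm{d}\mu\le1$ is exactly what rules out any escape of mass to extreme values of the Radon--Nikodym derivative, and it is worth stressing that no conservativity or ergodicity assumption on $T$ is needed for either direction.
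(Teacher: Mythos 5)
Your argument is correct, and while your easy direction coincides in substance with the paper's (both dominate the fibre integral by $\rho_{n}^{1/2}$ -- the paper via Cauchy--Schwarz in the $s$-variable applied to $f\otimes g$, you via the support and sup bound of your overlap kernel $\kappa$), your treatment of the hard ``only if'' direction is genuinely different. The paper, extracting the argument from Rosi\'nski's work, tests against the \emph{unbounded} power-law functions $g\left(s\right)=s^{\frac{\alpha}{2}-\epsilon}1_{s\ge1}$ and $g^{\prime}\left(s\right)=s^{\frac{\alpha}{2}-\epsilon}1_{s\ge c}$, lower-bounds the Maharam correlation through a Jensen inequality with the concave function $x\mapsto x^{\frac{1}{1+2\epsilon/\alpha}}$, kills the tail $\{\sqrt{\rho_{n}}>c^{-\alpha}\}$ by Cauchy--Schwarz and Markov, and concludes that $\int_{\Omega}\sqrt{\rho_{n}}\,\text{d}\mu\to0$. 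You instead stay entirely with indicator rectangles, compute the kernel $\kappa\left(\rho\right)=\int_{[c,d]\cap\rho^{-1/\alpha}[c,d]}s^{-1-\alpha}\text{d}s$ explicitly, and run a three-zone truncation in $\rho_{n}$: the two tails are controlled uniformly in $n$ by $\int_{\Omega}\rho_{n}\,\text{d}\mu=1$ (your $\Lambda^{-1/2}$ bound is in substance the paper's Cauchy--Schwarz/Markov tail estimate, obtained even more directly) and the middle zone by the zero-type hypothesis, since $\kappa\ge\kappa_{0}>0$ there. This buys elementarity and transparency -- no Jensen step, no $\epsilon$-bookkeeping, and it isolates $\left\Vert \rho_{n}\right\Vert _{L^{1}\left(\mu\right)}=1$ as the only global input -- at the price of needing your compactly supported bumps to exhaust $\left(0,\infty\right)$ as $d/c\to\infty$, whence the extra lower truncation, which the paper's unbounded test functions render unnecessary. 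Two points you should make explicit, though neither is a genuine gap: the reduction of zero type to \emph{self}-correlations of rectangles needs the positivity of the Koopman operator (a cross term $\left\langle U_{\widetilde{T}_{\alpha}}^{n}1_{A\times[c,d]},1_{B\times[c^{\prime},d^{\prime}]}\right\rangle $ is dominated by the self-correlation of a rectangle containing both, whereas polarization alone is circular here); and, exactly as the paper does, you should record the harmless normalization that $\mu$ may be taken to be a probability measure, since equivalent measures yield isomorphic Maharam systems.
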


The proof can be extracted from \cite{ros96mixsta} but follows also
from the observation that the conditions below are equivalent (we
assume that $\mu$ is a probability):
\begin{longlist}[(1)]
\item[(1)] for all $f\in L^{2}(\mu)$, $\langle U_{T}^{n}f,f\rangle
_{L^{2}(\mu)}\to0$
as $n$ tends to infinity;
\item[(2)]$|\log\frac{\mathrm{d}T_{*}^{-n}\mu}{\mathrm{d}\mu}|\to\infty$
in probability;\vadjust{\goodbreak}
\item[(3)]$m(A_{\varepsilon}\cap\widetilde{T}_{\alpha}^{n}A_{\varepsilon})\to0$
for all $0<\varepsilon<1$ where $m=\mu\otimes\frac{1}{s^{1+\alpha}}\,\mathrm{d}s$
and $A_{\varepsilon}:=\Omega\times[\varepsilon,\frac{1}{\varepsilon}]$;\vspace*{2pt}
\item[(4)]$\widetilde{T}_{\alpha}$ is of zero type.
\end{longlist}
Combining this result with the characterization of the L\'{e}vy measure
system as a Maharam system and the mixing criteria found in \cite{Roy06IDstat},
we obtain the following theorem, already known in the $S\alpha S$-case
(see \cite{ros96mixsta}):
\begin{theorem}
A stationary stable process $(\mathbb{R}^{\mathbb{Z}},\mathcal
{B}^{\otimes\mathbb{Z}},\mathbb{P},S)$
with associated system $(\Omega,\mathcal{F},\mu,T)$ is
mixing if and only if for all $f\in L^{2}(\mu)$, $\langle
U_{T}^{n}f,\allowbreak f\rangle_{L^{2}(\mu)}\to0$
as $n$ tends to infinity.
\end{theorem}

In the forthcoming sections, we are interested in less-known ergodic
properties ($K$ property and rigidity) that have been neglected in
the $\alpha$-stable literature.

\subsection{$K$ property}
\begin{defn}[(see \cite{SilThieu95skewent})]
\label{defK-system}  A conservative
nonsingular system $(\Omega,\mathcal{F},\mu,T)$ is a
$K$-system if there exists a sub-$\sigma$-algebra $\mathcal{G}\subset
\mathcal{F}$
such that $T^{-1}\mathcal{G}\subset\mathcal{G}$, $T^{-n}\mathcal
{G}\downarrow\{ \Omega,\varnothing\} $,
$T^{n}\mathcal{G}\uparrow\mathcal{F}$ and $\frac{\mathrm{d}\mu}{\mathrm
{d}T_{*}\mu}$
is $\mathcal{G}$-measurable.
\end{defn}

A $K$-system is always ergodic (see \cite{SilThieu95skewent}) .
\begin{defn}
A measure-preserving system $(X,\mathcal{A},m,S)$ is remotely
infinite if there exists a sub-$\sigma$-algebra $\mathcal{C}\subset
\mathcal{A}$
such that $T^{-1}\mathcal{C}\subset\mathcal{C}$, $S^{n}\mathcal
{C}\uparrow\mathcal{A}$
and $\bigcap_{n\ge1}S^{-n}\mathcal{C}$ contains zero or infinite
measure sets only.
\end{defn}
\begin{prop}
\label{proMaharamK}If $(\Omega,\mathcal{F},\mu,T)$ is
a $K$-system which is not of type $\mathrm{II}_{1}$, then its Maharam
extension $(\Omega\times\mathbb{R}_{+}^{*},\mathcal{F}\otimes\mathcal
{B}_{+},\mu\otimes\frac{1}{s^{1+\alpha}}\,\mathrm{d}s,\widetilde
{T}_{\alpha})$
is remotely infinite.
\end{prop}
\begin{pf}
Let $\mathcal{G}$ be as in Definition \ref{defK-system}. Observe
that, as $\frac{\mathrm{d}\mu}{\mathrm{d}T_{*}\mu}$ is $\mathcal{G}$-measur\-able,
$\mathcal{G}\otimes\mathcal{B}_{+}$ is $\widetilde{T}_{\alpha}$-invariant,
that is, $\widetilde{T}_{\alpha}^{-1}\mathcal{G}\,\otimes\,\mathcal
{B}_{+}\subset\mathcal{G}\,\otimes\,\mathcal{B}_{+}$.
Indeed, take $g$ $\mathcal{G}$-measurable and $f$ $\mathcal{B}_{+}$-measurable,
and we get
\begin{eqnarray*}
g\otimes f(\widetilde{T}_{\alpha}(\omega,s))&=&\biggl(g(T\omega),s\biggl(\frac{\mathrm
{d}T_{*}^{-1}\mu}{\mathrm{d}\mu}(\omega)\biggr)^{{1}/{\alpha}}\biggr)\\
&=&\biggl(g(T\omega
),s\biggl(\frac{\mathrm{d}\mu}{\mathrm{d}T_{*}\mu}(T\omega)\biggr)^{{1}/{\alpha}}\biggr).
\end{eqnarray*}

We are going to show that $\mathcal{P}:={ \bigcap_{n\in\mathbb
{N}}}\widetilde{T}_{\alpha}^{-n}\mathcal{G}\otimes\mathcal{B}_{+}$
only contains sets of zero or infinite measure. Observe that, as $S_{t}$
commutes with $\widetilde{T}_{\alpha}$ and preserves $\mathcal{G}\otimes
\mathcal{B}_{+}$
for all $t>0$, then $S_{t}^{-1}(\widetilde{T}_{\alpha}^{-n}\mathcal
{G}\otimes\mathcal{B}_{+})\subset\widetilde{T}_{\alpha}^{-n}\mathcal
{G}\otimes\mathcal{B}_{+}$
and therefore $S_{t}^{-1}\mathcal{P}\subset\mathcal{P}$ for all $t>0$.
Now consider the measurable union, say $K$, of $\mathcal{P}$-measurable
sets of finite and positive measure. It is a $\widetilde{T}_{\alpha}$-invariant
set and a $S_{t}$-invariant set as well. Recall that the nonsingular
action of the flow $S_{t}$ on the ergodic components of $(\Omega\times
\mathbb{R}_{+}^{*},\mathcal{F}\otimes\mathcal{B}_{+},\mu\otimes\frac
{1}{s^{1+\alpha}}\,\mathrm{d}s,\widetilde{T}_{\alpha})$
is ergodic; therefore, if $K\neq\varnothing$, then $K=\Omega\times\mathbb
{R}_{+}^{*}$
mod. $\nu\otimes\frac{1}{s^{1+\alpha}}\,\mathrm{d}s$.

Assume $K=\Omega\times\mathbb{R}_{+}^{*}$. This implies that the
measure $\mu\otimes\frac{1}{s^{1+\alpha}}\,\mathrm{d}s$ is $\sigma$-finite
on $\mathcal{P}$, and therefore $\mathcal{P}$ is a factor of $(\Omega
\times\mathbb{R}_{+}^{*},\mathcal{F}\otimes\mathcal{B}_{+},\mu\otimes
\frac{1}{s^{1+\alpha}}\,\mathrm{d}s,\widetilde{T}_{\alpha})$.
Now consider the quotient space $(\Omega\times\mathbb
{R}_{+}^{*})_{\diagup\mathcal{P}}$
that we can endow, with a~slight abuse of notation with the $\sigma$-algebra
$\mathcal{P}$. Let $\rho$ be the image measure of $\mu\otimes\frac
{1}{s^{1+\alpha}}\,\mathrm{d}s$
by the projection map $\pi$. On $((\Omega\times\mathbb
{R}_{+}^{*})_{\diagup\mathcal{P}},\mathcal{P},\rho)$
$\widetilde{T}_{\alpha}$ and the dissipative flow $S_{t}$ induce
a transformation $U$ and a dissipative flow $Z_{t}$ that satisfy
\[
\pi\circ\widetilde{T}_{\alpha}=U\circ\pi, \pi\circ S_{t}=U\circ\pi\quad\mbox
{and}\quad U\circ Z_{t}=Z_{t}\circ U.
\]

Of course, thanks to Theorem \ref{thmMaharam}, $((\Omega\times\mathbb
{R}_{+}^{*})_{\diagup\mathcal{P}},\mathcal{P},\rho,U)$
is a Maharam system; therefore, we can represent it as $(Y\times\mathbb
{R}_{+}^{*},\mathcal{K}\otimes\mathcal{B}_{+},\sigma\otimes\frac
{1}{s^{1+\alpha}}\,\mathrm{d}s,\widetilde{L}_{\alpha})$
for a nonsingular system $(Y,\mathcal{K},\sigma,L)$.
Applying Lemma \ref{lemTwoMaharam}, $\pi$ induces a nonsingular
factor map $\Gamma$ from $(\Omega,\mathcal{G},\mu,T)$
to $(Y,\mathcal{K},\sigma,L)$, which means that there
exists an $R$-invariant $\sigma$-algebra $\mathcal{Z}\subset\mathcal{G}$
such that $\Gamma^{-1}\mathcal{K}=\mathcal{Z}$. But we can observe,
that for all $n>0$, the factor $\widetilde{T}_{\alpha}^{-n}\mathcal
{G}\otimes\mathcal{B}_{+}$
corresponds to a Maharam system that corresponds to the factor
$T^{-n}\mathcal{G}$
of $(\Omega,\mathcal{G},\mu,T)$. Therefore, for all $n>0$,
$\mathcal{Z}\subset T^{-n}\mathcal{G}$, that is, $\mathcal{Z}\subset{
\bigcap_{n\in\mathbb{N}}}T^{-n}\mathcal{G}=\{ \Omega,\varnothing\} $.
This means that $\mathcal{K}=\{ Y,\varnothing\} $, or, in
other words, that $(Y,\mathcal{K},\sigma,L)$ is the trivial
(one-point) system. $(Y\times\mathbb{R}_{+}^{*},\mathcal{K}\otimes
\mathcal{B}_{+},\sigma\otimes\frac{1}{s^{1+\alpha}}\,\mathrm
{d}s,\widetilde{L}_{\alpha})$
then possesses lots of invariant sets of positive finite measure,
for example $A:=Y\times[1,2]$. But $\pi^{-1}(A)$
is in turn a positive and finite measure invariant set for the system
$(\Omega\times\mathbb{R}_{+}^{*},\mathcal{F}\otimes\mathcal{B}_{+},\mu
\otimes\frac{1}{s^{1+\alpha}}\,\mathrm{d}s,\widetilde{T}_{\alpha})$,
and the existence of such set is impossible in a Maharam extension
of an ergodic system which does not posses a finite $T$-invariant
probability measure $\nu\ll\mu$. We can conclude that $K=\varnothing$.

To prove that $(\Omega\times\mathbb{R}_{+}^{*},\mathcal{F}\otimes
\mathcal{B}_{+},\mu\otimes\frac{1}{s^{1+\alpha}}\,\mathrm{d}s,\widetilde
{T}_{\alpha})$
is remotely infinite, it remains to show that $\bigvee_{n\in\mathbb
{Z}}\widetilde{T}_{\alpha}^{-n}\mathcal{G}\otimes\mathcal
{B}_{+}=\mathcal{F}\otimes\mathcal{B}_{+}$.
We only sketch the proof which consists of verifying that the operation
of taking natural extension and Maharam extension commute.

Of course, we have $\bigvee_{n\in\mathbb{Z}}\widetilde{T}_{\alpha
}^{-n}\mathcal{G}\otimes\mathcal{B}_{+}\subset\mathcal{F}\otimes\mathcal
{B}_{+}$.
It is not difficult to check that $\bigvee_{n\in\mathbb{Z}}\widetilde
{T}_{\alpha}^{-n}\mathcal{G}\otimes\mathcal{B}_{+}$
corresponds to a Maharam system that comes from a $\sigma$-algebra
$\mathcal{H}\subset\mathcal{F}$. But we also have $\mathcal{G}\subset
\mathcal{H}$
and as $T^{-1}\mathcal{H}=\mathcal{H}$, we get $\bigvee_{n\in\mathbb
{Z}}T^{-n}\mathcal{G}\subset\mathcal{H}$.
By assumption, $\bigvee_{n\in\mathbb{Z}}T^{-n}\mathcal{G}=\mathcal{F}$,
and we deduce $\mathcal{H}=\mathcal{F}$ which implies $\bigvee_{n\in
\mathbb{Z}}\widetilde{T}_{\alpha}^{-n}\mathcal{G}\otimes\mathcal
{B}_{+}=\mathcal{F}\otimes\mathcal{B}_{+}$.
\end{pf}

As before we deduce the following result for $\alpha$-stable stationary
processes:

\begin{theorem}
\label{thmStableK}Let $(\mathbb{R}^{\mathbb{Z}},\mathcal{B}^{\otimes
\mathbb{Z}},\mathbb{P},S)$
be a stationary stable process with associated system $(\Omega,\mathcal
{F},\mu,T)$.
If $(\Omega,\mathcal{F},\mu,T)$ is $K$ and not of type
$\mathrm{II}_{1}$, then $(\mathbb{R}^{\mathbb{Z}},\mathcal{B}^{\otimes
\mathbb{Z}},\mathbb{P},S)$
is $K$.
\end{theorem}
\begin{pf}
From Proposition \ref{proMaharamK}, we know that the L\'{e}vy measure
system of the stable process is remotely infinite. The corresponding
Poisson suspension is $K$ by a result from \cite{Roy07Infinite}.
By applying Maruyama's representation Theorem (Theorem \ref{thmMaruyama-rep}),
we recover the stable process as a factor of the suspension, which
therefore inherits the $K$ property.
\end{pf}

Recall that in the probability preserving context, $K$ is strictly
stronger than mixing. In \cite{ros96mixsta}, to produce examples
of mixing $\alpha$-stable stationary processes that were not based
on dissipative nonsingular systems, the authors considered indeed
null recurrent Markov chains as base systems. These systems are
well-known examples of $K$-systems; therefore Theorem \ref{thmStableK}
shows that the associated $\alpha$-stable stationary processes are
not just merely mixing but are indeed $K$.

\subsection{Rigidity}

We recall that a system $(\Omega,\mathcal{F},\mu,T)$ is
rigid if there exists an increasing sequence $n_{k}$ such that
$T^{n_{k}}\rightarrow\operatorname{Id}$
in the group of nonsingular automorphism on $(\Omega,\mathcal{F},\mu)$ [the
convergence being equivalent to the weak convergence in $L^{2}(\mu)$
of the associated unitary operators $U_{T^{n_{k}}}\dvtx f\mapsto\sqrt{\frac
{\mathrm{d}T_{*}^{-n_{k}}\mu}{\mathrm{d}\mu}}f\circ T^{n_{k}}$
to the identity]. Observe that in the finite measure case, rigidity
does not imply ergodicity but prevents mixing.
\begin{prop}
The Maharam system $(\Omega\times\mathbb{R}_{+}^{*},\mathcal{F}\otimes
\mathcal{B}_{+},\mu\otimes\frac{1}{s^{1+\alpha}}\,\mathrm{d}s,\widetilde
{T}_{\alpha})$
is rigid for the sequence $n_{k}$ if and only $(\Omega,\mathcal{F},\mu,T)$
is rigid for the sequence~$n_{k}$.
\end{prop}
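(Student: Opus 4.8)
The plan is to prove both implications by relating the unitary operator $U_{\widetilde{T}_{\alpha}}$ on $L^{2}\left(\mu\otimes\frac{1}{s^{1+\alpha}}\text{d}s\right)$ to the operator $U_{T}$ on $L^{2}\left(\mu\right)$, exploiting the product structure of the Maharam extension. First I would compute the Radon--Nikodym derivative of $\widetilde{T}_{\alpha}$ with respect to the measure $\mu\otimes\frac{1}{s^{1+\alpha}}\text{d}s$. Writing $\widetilde{T}_{\alpha}\left(\omega,s\right)=\left(T\omega,s\left(\frac{\text{d}T_{*}^{-1}\mu}{\text{d}\mu}\left(\omega\right)\right)^{\frac{1}{\alpha}}\right)$, the factor $\frac{1}{s^{1+\alpha}}\text{d}s$ is designed precisely so that the scaling $s\mapsto s\cdot c^{1/\alpha}$ with $c=\frac{\text{d}T_{*}^{-1}\mu}{\text{d}\mu}\left(\omega\right)$ transforms it compatibly; the key algebraic check is that the resulting Jacobian on the second coordinate cancels correctly so that $\frac{\text{d}(\widetilde{T}_{\alpha})_{*}^{-1}(\mu\otimes\frac{1}{s^{1+\alpha}}\text{d}s)}{\text{d}(\mu\otimes\frac{1}{s^{1+\alpha}}\text{d}s)}\left(\omega,s\right)$ depends only on $\omega$ and equals $\frac{\text{d}T_{*}^{-1}\mu}{\text{d}\mu}\left(\omega\right)$. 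This is the computation I expect to be routine but central, and it shows that $U_{\widetilde{T}_{\alpha}}$ acts on functions $f\otimes g$ in a way that partly decouples the two coordinates.

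For the ``only if'' direction, I would test rigidity against product functions. If $\widetilde{T}_{\alpha}^{n_{k}}\to\text{Id}$ weakly, then for any $f\in L^{2}\left(\mu\right)$ and any fixed $g\in L^{2}\left(\frac{1}{s^{1+\alpha}}\text{d}s\right)$ with $\left\Vert g\right\Vert_{2}=1$, one has $\left\langle U_{\widetilde{T}_{\alpha}}^{n_{k}}(f\otimes g), f\otimes g\right\rangle\to\left\Vert f\right\Vert_{2}^{2}\left\Vert g\right\Vert_{2}^{2}$. Because of the near-decoupling, the inner product factors (or is bounded below by) an expression of the form $\left\langle U_{T}^{n_{k}}f,f\right\rangle$ times a coordinate-$s$ contribution that, under the scaling induced by the Radon--Nikodym cocycle, tends to $\left\Vert g\right\Vert_{2}^{2}$ only when the cocycle contribution is negligible. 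The cleanest route is to choose $g$ so that the $s$-integral isolates $\left\langle U_{T}^{n_{k}}f,f\right\rangle$, forcing $U_{T}^{n_{k}}f\to f$ weakly for every $f$, i.e. $T^{n_{k}}\to\text{Id}$.

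For the ``if'' direction, assume $T^{n_{k}}\to\text{Id}$, equivalently $U_{T}^{n_{k}}\to\text{Id}$ weakly on $L^{2}\left(\mu\right)$, which in particular forces the Radon--Nikodym derivatives $\frac{\text{d}T_{*}^{-n_{k}}\mu}{\text{d}\mu}\to1$ in the appropriate sense. Since product functions $f\otimes g$ span a dense subspace of $L^{2}\left(\mu\otimes\frac{1}{s^{1+\alpha}}\text{d}s\right)$ and the operators $U_{\widetilde{T}_{\alpha}}^{n_{k}}$ are uniformly bounded (they are unitary), it suffices to verify $\left\langle U_{\widetilde{T}_{\alpha}}^{n_{k}}(f\otimes g), f'\otimes g'\right\rangle\to\left\langle f\otimes g, f'\otimes g'\right\rangle$ on such products. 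Using the explicit form of $\widetilde{T}_{\alpha}^{n_{k}}$ and the convergence of the cocycle to the identity, the scaling on the second coordinate converges to the identity as well, so the $s$-integral converges to $\left\langle g,g'\right\rangle$ while the $\omega$-integral converges to $\left\langle f,f'\right\rangle$ by rigidity of $T$.

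The main obstacle is the ``only if'' direction: extracting rigidity of $T$ from rigidity of $\widetilde{T}_{\alpha}$ requires controlling the coupling between the two coordinates, since the $s$-scaling in $\widetilde{T}_{\alpha}$ is governed by the very Radon--Nikodym cocycle of $T$ whose convergence we are trying to establish. The delicate point is to choose the test function $g$ in the $s$-variable so that the contribution of the cocycle to the inner product can be disentangled and shown to converge to its full norm only when $U_{T}^{n_{k}}f\to f$; in particular one must rule out the possibility that rigidity is ``absorbed'' by the dissipative $s$-direction (where the flow $S_{t}$ acts) without the base $T$ being rigid. Handling this likely requires either a careful limiting argument letting the support of $g$ concentrate, or an appeal to the fact that the flow $\{S_{t}\}$ acts ergodically on the ergodic components so that any rigidity must descend to the base.
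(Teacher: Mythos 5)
Your opening computation contains an error that undercuts the framework you build on it: the Radon--Nikodym derivative of $\widetilde{T}_{\alpha}$ with respect to $\mu\otimes\frac{1}{s^{1+\alpha}}\text{d}s$ is not $\frac{\text{d}T_{*}^{-1}\mu}{\text{d}\mu}\left(\omega\right)$ --- it is identically $1$. The Jacobian of $s\mapsto s\left(\frac{\text{d}T_{*}^{-1}\mu}{\text{d}\mu}\left(\omega\right)\right)^{\frac{1}{\alpha}}$ against $\frac{1}{s^{1+\alpha}}\text{d}s$ produces exactly the factor $\frac{\text{d}T_{*}^{-1}\mu}{\text{d}\mu}\left(\omega\right)$, which cancels the base cocycle; that the extension preserves the product measure is the defining feature of the Maharam construction (and is used throughout the paper). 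Consequently $U_{\widetilde{T}_{\alpha}}$ is plain composition, and the only coupling between the coordinates is the $s$-scaling by the cocycle. Your ``if'' direction survives this correction, though the paper gets it for free: $T\mapsto\widetilde{T}_{\alpha}$ is a continuous group homomorphism from the non-singular automorphisms of $\left(\Omega,\mathcal{F},\mu\right)$ to the measure-preserving automorphisms of the product, so $T^{n_{k}}\to\text{Id}$ immediately gives $\widetilde{T}_{\alpha}^{n_{k}}\to\text{Id}$; your hands-on density argument would additionally need dominated-convergence care with the cocycle weight, which you only gesture at.

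The genuine gap is the ``only if'' direction, which you correctly single out as the obstacle but never close --- and you even guess the wrong inequality direction (``factors, or is bounded below by''). What closes it is an \emph{upper} bound: Cauchy--Schwarz in the $s$-variable, together with the exact scaling identity $\int_{0}^{\infty}\left|g\left(sw^{\frac{1}{\alpha}}\right)\right|^{2}\frac{1}{s^{1+\alpha}}\text{d}s=w\left\Vert g\right\Vert _{2}^{2}$ where $w$ is the $n$-step Radon--Nikodym cocycle, yields for nonnegative $f$
\[
\left\langle U_{\widetilde{T}_{\alpha}}^{n}\left(f\otimes g\right),f\otimes g\right\rangle \le\left\Vert g\right\Vert _{2}^{2}\left\langle U_{T}^{n}f,f\right\rangle \le\left\Vert g\right\Vert _{2}^{2}\left\Vert f\right\Vert _{2}^{2}.
\]
Rigidity of $\widetilde{T}_{\alpha}$ drives the left side to $\left\Vert f\right\Vert _{2}^{2}\left\Vert g\right\Vert _{2}^{2}$, so the squeeze forces $\left\langle U_{T}^{n_{k}}f,f\right\rangle \to\left\Vert f\right\Vert _{2}^{2}$, hence $U_{T}^{n_{k}}f\to f$ strongly for each nonnegative $f$, which spans enough of $L^{2}\left(\mu\right)$ to conclude rigidity of $T$. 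This one-line estimate (the same one the paper uses in its zero-type proposition) makes both of your proposed rescue routes unnecessary; note moreover that the appeal to ergodicity of the associated flow on the ergodic components would be illegitimate here, since the proposition assumes no ergodicity of $T$ whatsoever. As written, your ``only if'' half is a statement of intent rather than a proof.
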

\begin{pf}
First observe that the map $T\mapsto\widetilde{T}_{\alpha}$ is a
continuous group homomorphism from the group of nonsingular automorphism
of $(\Omega,\mathcal{F},\mu)$ to the group of measure
preserving automorphism of $(\Omega\times\mathbb{R}_{+}^{*},\mathcal
{F}\otimes\mathcal{B}_{+},\mu\otimes\frac{1}{s^{1+\alpha}}\,\mathrm{d}s)$.
As $T^{n_{k}}\rightarrow\operatorname{Id}$, then $\widetilde{T}_{\alpha
}^{n_{k}}\rightarrow\operatorname{Id}$;
therefore $\widetilde{T}_{\alpha}^{n_{k}}$ is rigid for the sequence
$n_{k}$.\vspace*{2pt}

Conversely, if $\widetilde{T}_{\alpha}$ is rigid for the same sequence,
then, as
\[
\langle U_{\widetilde{T}_{\alpha}}^{n_{k}}f\otimes g,f\otimes g\rangle
_{L^{2}(\mu\otimes{1}/{(s^{1+\alpha})}\,\mathrm{d}s)}\le\Vert g\Vert
_{2}^{2}\langle U_{T}^{n_{k}}f,f\rangle_{L^{2}(\mu)}\leq\Vert g\Vert
_{2}^{2}\Vert f\Vert_{2}^{2}
\]
and $\langle U_{\widetilde{T}_{\alpha}}^{n_{k}}f\otimes g,f\otimes
g\rangle_{L^{2}(\mu\otimes{1}/{(s^{1+\alpha})}\,\mathrm{d}s)}\to\Vert
g\Vert_{2}^{2}\Vert f\Vert_{2}^{2}$,
we get $\langle U_{T}^{n_{k}}f,f\rangle_{L^{2}(\mu)}\to\Vert f\Vert_{2}^{2}$;
thus $T$ is rigid.
\end{pf}

We need the following general result:
\begin{prop}
A stationary IDp stationary process $(\mathbb{R}^{\mathbb{Z}},\mathcal
{B}^{\otimes\mathbb{Z}},\mathbb{P},S)$
is rigid for the sequence $n_{k}$ if and only if its L\'{e}vy measure
system $(\mathbb{R}^{\mathbb{Z}},\mathcal{B}^{\otimes\mathbb{Z}},Q,S)$
is rigid for the sequence $n_{k}$.
\end{prop}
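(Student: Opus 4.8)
The plan is to show that each of the two rigidities is equivalent to the weak convergence $U_S^{n_k}\to\mathrm{Id}$ of the Koopman operator, and then to identify these two convergences through the L\'evy--Khintchine formula. Since $\mathbb{P}$ is stationary and $Q$ is shift-invariant, the shift $S$ acts as a \emph{unitary} operator on both $L^2(\mathbb{P})$ and $L^2(Q)$ (the Radon--Nikodym factor appearing in $U_S$ is $1$), so in each case rigidity for $n_k$ means exactly $\langle U_S^{n_k}F,G\rangle\to\langle F,G\rangle$. On $L^2(\mathbb{P})$ I would test this against the exponentials $g_a:=\exp(i\langle a,\cdot\rangle)$, $a$ finitely supported, which are total because $\mathbb{P}$ is a probability measure and the coordinates generate $\mathcal{B}^{\otimes\mathbb{Z}}$; combined with the bound $\|U_S^{n_k}\|=1$, this makes rigidity of the process equivalent to the convergence of all matrix coefficients $\langle U_S^{n_k}g_b,g_a\rangle_{L^2(\mathbb{P})}$.

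The main computation is to rewrite such a matrix coefficient, which is a joint characteristic function of the process, by means of the representation of $Q$. First I record that $g_b-1\in L^2(Q)$ for finitely supported $b$: from $|e^{iu}-1|^2\le 4(u^2\wedge1)$ and $\int(x_0^2\wedge 1)\,dQ<\infty$, shift-invariance gives $\int(\langle b,x\rangle^2\wedge1)\,dQ<\infty$. Writing the exponent of $\langle U_S^{n_k}g_b,g_a\rangle_{L^2(\mathbb{P})}$ via L\'evy--Khintchine and splitting $e^{i\langle b,S^{n_k}x\rangle-i\langle a,x\rangle}-1=(g_{-a}-1)+(g_b\circ S^{n_k}-1)+(g_{-a}-1)(g_b\circ S^{n_k}-1)$, the compensated integrals of the first two summands are finite (because $Q$ is a L\'evy measure) and, crucially, shift-invariance of $Q$ makes them \emph{independent of} $k$; the only $k$-dependence sits in the cross term, which equals $\langle U_S^{n_k}(g_b-1),\,g_a-1\rangle_{L^2(Q)}$. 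Carrying out the same splitting for the target inner product, I conclude that rigidity of the process is equivalent to
\[
\langle U_S^{n_k}(g_b-1),\,g_a-1\rangle_{L^2(Q)}\longrightarrow\langle g_b-1,\,g_a-1\rangle_{L^2(Q)}\qquad\text{for all finitely supported }a,b.
\]
The passage from convergence of the matrix coefficients, which are exponentials of these pairings, to convergence of the pairings themselves is harmless once one restricts to coefficients $a,b$ small enough that the pairings stay in a region where $\exp$ is injective.

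It then remains to match the displayed condition with rigidity of the L\'evy measure system. One implication is immediate: if $U_S^{n_k}\to\mathrm{Id}$ weakly on $L^2(Q)$ then every such pairing converges, and running the equivalence backwards yields rigidity of the process. For the converse I would invoke the density statement that I expect to be the \textbf{main obstacle}: the family $\{g_b-1:\ b\text{ finitely supported}\}$ is total in $L^2(Q)$. Granting it, weak convergence along this total family together with $\|U_S^{n_k}\|=1$ upgrades to $U_S^{n_k}\to\mathrm{Id}$ weakly on all of $L^2(Q)$, i.e. to rigidity of the L\'evy measure system, which closes the equivalence. This totality is precisely where the L\'evy structure must be used: on each region $\{\,|x_n|>\varepsilon\,\}$ the measure $Q$ is finite and a Stone--Weierstrass argument makes the exponentials total there, while near the origin the functions $g_b-1\sim i\langle b,\cdot\rangle$ must be controlled in $L^2(Q)$ through the condition $\int(x_0^2\wedge1)\,dQ<\infty$; reconciling the two regimes into a single totality statement on the infinite space $(\mathbb{R}^{\mathbb{Z}},Q)$ is the delicate point, and it is the exact $L^2$-analogue of the first-chaos density underlying the Poisson/Maruyama description of the process used throughout Section~\ref{sec:L=0000E9vy-measure-as}.
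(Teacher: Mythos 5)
Your proposal is correct, and its computational core is in fact the same identity the paper uses: your cross term on the diagonal $a=b$ is exactly the Fourier coefficient $\widehat{\sigma_a}\left(n_k\right)=\left\langle U_S^{n_k}\left(g_a-1\right),g_a-1\right\rangle _{L^{2}\left(Q\right)}$ of the spectral measure $\sigma_a$ of $g_a-1$ under $Q$, and your L\'evy--Khintchine splitting is precisely the elementary derivation of the relation $\widehat{\lambda_a}\left(n\right)=\left|\mathbb{E}\left[e^{i\left\langle a,X\right\rangle }\right]\right|^{2}\left(e^{\widehat{\sigma_a}\left(n\right)}-1\right)$, which the paper instead imports from \cite{Roy06IDstat} in the form $\lambda_a=\left|\mathbb{E}\left[e^{i\left\langle a,X\right\rangle }\right]\right|^{2}\sum_{k\geq1}\frac{1}{k!}\sigma_a^{*k}$. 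The genuine differences are in the two bookkeeping steps. First, the paper stays on the diagonal, which silently removes your $\exp$-injectivity caveat: since $\left|\mathrm{cross}_k\right|\leq\left\Vert g_a-1\right\Vert _{L^{2}\left(Q\right)}^{2}$ and $\left|e^{\mathrm{cross}_k}\right|=e^{\Re\,\mathrm{cross}_k}$, convergence of the modulus alone forces $\Re\,\mathrm{cross}_k\to\left\Vert g_a-1\right\Vert ^{2}$ and hence $\mathrm{cross}_k\to\left\Vert g_a-1\right\Vert ^{2}$, with no smallness restriction; if you keep off-diagonal pairs you should also make the ``harmless'' upgrade from small to general $b$ explicit, e.g. via $g_b-1=\left(1+\left(g_{b/m}-1\right)\right)^{m}-1$ together with the fact that bounded rigid vectors form an algebra because the Koopman operator is multiplicative. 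Second, your globalization lemma --- totality of $\left\{ g_b-1\right\} $ in $L^{2}\left(Q\right)$ --- replaces the paper's softer step, namely that the rigid vectors $e^{i\left\langle a,X\right\rangle }-1$ generate $\mathcal{B}^{\otimes\mathbb{Z}}$ mod $Q$ and that a rigid generating family yields a rigid system via the rigid-factor remark. The obstacle you flag is real but the lemma is true, and your diagnosis is exact: it is the first-chaos statement, and the quickest proof is to project the total family of exponential vectors of $L^{2}\left(\mathbb{P}\right)$ onto the first chaos of the Fock-space decomposition of $L^{2}\left(\mathbb{P}\right)$ over $L^{2}\left(Q\right)$ from \cite{Roy06IDstat} (density survives the continuous surjective projection); alternatively your two-regime sketch closes because the bound $\left|e^{i\left\langle b,x\right\rangle }-1\right|\leq\left(1+\left|b\right|\right)\left(\left|x_{F}\right|\wedge1\right)$ justifies Fubini against $\hat{\psi}\left(b\right)\text{d}b$, yielding $\int h\psi\,\text{d}Q=0$ for cylinder Schwartz functions $\psi$ with $\psi\left(0\right)=0$, and these are dense in $L^{2}\left(Q\right)$ since $Q$ is exhausted by the finite-measure sets $\left\{ \left|x_{n}\right|>\varepsilon\right\} $. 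In sum, your route is more self-contained (it rederives the spectral identity from the characteristic functional rather than citing it) at the price of the totality lemma and the injectivity/product fixes; the paper's proof is shorter because the spectral-measure formula and the factor language absorb exactly those costs.
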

\begin{pf}
Consider $X:=\{ X_{n}\} _{n\in\mathbb{Z}}$ where $X_{n}:=\{ x_{k}\}
_{k\in\mathbb{Z}}\mapsto x_{n}$
on $\mathbb{R}^{\mathbb{Z}}$ and let $\langle a,X\rangle$
be a finite linear combination of the coordinates. $\exp{i}\langle
a,X\rangle-\mathbb{E}[\exp{i}\langle a,X\rangle]$
is a centered square integrable vector under $\mathbb{P}$ whose spectral
measure (under $\mathbb{P}$) is $\lambda_{a}:=|\mathbb{E}[\exp
{i}\langle a,X\rangle]|^{2}{ \sum_{k=1}^{\infty}}\frac{1}{k!}\sigma_{a}^{*k}$
where $\sigma_{a}$ is the spectral measure of $\exp{i}\langle
a,X\rangle-1$
under~$Q$ (see \cite{Roy06IDstat}). Therefore $\widehat{\sigma
_{a}}(n_{k})\to\widehat{\sigma_{a}}(0)$
if and only if $\widehat{\lambda_{a}}(n_{k})\to\widehat{\lambda_{a}}(0)$.
This implies that $\exp{i}\langle a,X\rangle-\mathbb{E}[\exp
{i}\langle a,X\rangle]$
is a rigid vector for $n_{k}$ under $\mathbb{P}$ if and only if
$\exp{i}\langle a,X\rangle-1$ is a rigid vector
for~$n_{k}$ under~$Q$. Observe now that the smallest $\sigma$-algebra
generated by vectors of the kind $\exp{i}\langle a,X\rangle
-\mathbb{E}[\exp{i}\langle a,X\rangle]$
under $\mathbb{P}$ is $\mathcal{B}^{\otimes\mathbb{Z}}$, and the
same is true with vectors of the kind $\exp{i}\langle a,X\rangle-1$
under $Q$. As in any dynamical system if there exists a rigid vector
for the sequence $n_{k}$, there exists a~nontrivial factor which
is rigid for the sequence $n_{k}$, we get the announced result.
\end{pf}
\begin{theorem}
\label{thmRigidStable}A stationary stable process $(\mathbb{R}^{\mathbb
{Z}},\mathcal{B}^{\otimes\mathbb{Z}},\mathbb{P},S)$
with associated system $(\Omega,\mathcal{F},\mu,T)$ is
rigid for the sequence $n_{k}$ if and only $(\Omega,\mathcal{F},\mu,T)$
is rigid for the sequence $n_{k}$.
\end{theorem}
\begin{pf}
This is the combination of the last two results.
\end{pf}
%

\begin{thebibliography}{20}

%
%
\bibitem{Aar97InfErg}
\begin{bbook}[mr]
\bauthor{\bsnm{Aaronson},~\bfnm{Jon}\binits{J.}}
(\byear{1997}).
\btitle{An Introduction to Infinite Ergodic Theory}.
\bseries{Mathematical Surveys and Monographs}
\bvolume{50}.
\bpublisher{Amer. Math. Soc.}, \baddress{Providence, RI}.
\bid{mr={1450400}}
\end{bbook}
\endbibitem

%
%
\bibitem{AarLemVol98salad}
\begin{barticle}[mr]
\bauthor{\bsnm{Aaronson},~\bfnm{Jon}\binits{J.}},
\bauthor{\bsnm{Lema{\'n}czyk},~\bfnm{Mariusz}\binits{M.}} \AND
\bauthor{\bsnm{Voln{\'y}},~\bfnm{Dalibor}\binits{D.}}
(\byear{1998}).
\btitle{A cut salad of cocycles}.
\bjournal{Fund. Math.}
\bvolume{157}
\bpages{99--119}.
\bid{issn={0016-2736}, mr={1636882}}
\end{barticle}
\endbibitem

%
%
\bibitem{DanSilv09NonSi}
\begin{bmisc}[auto:STB|2011-03-03|12:04:44]
\bauthor{\bsnm{Danilenko},~\bfnm{A.~I.}\binits{A.~I.}} \AND
\bauthor{\bsnm{Silva},~\bfnm{C.~E.}\binits{C.~E.}}
\bhowpublished{Ergodic theory: Nonsingular transformations. Preprint.
Available at
\texttt{
\href{http://dblp.uni-trier.de/rec/bibtex/reference/complexity/DanilenkoS09}{http://dblp.uni-trier.de/rec/bibtex/reference/}
\href{http://dblp.uni-trier.de/rec/bibtex/reference/complexity/DanilenkoS09}{complexity/DanilenkoS09}}}.
\end{bmisc}
\endbibitem

%
%
\bibitem{HajKakIt72invmeas}
\begin{barticle}[mr]
\bauthor{\bsnm{Hajian},~\bfnm{Arshag}\binits{A.}},
\bauthor{\bsnm{Ito},~\bfnm{Yuji}\binits{Y.}} \AND
\bauthor{\bsnm{Kakutani},~\bfnm{Shizuo}\binits{S.}}
(\byear{1972}).
\btitle{Invariant measures and orbits of dissipative transformations}.
\bjournal{Adv. Math.}
\bvolume{9}
\bpages{52--65}.
\bid{issn={0001-8708}, mr={0302860}}
\end{barticle}
\endbibitem

%
%
\bibitem{Hard82mini}
\begin{barticle}[mr]
\bauthor{\bsnm{Hardin},~\bfnm{Clyde~D.}\binits{C.~D.} \bsuffix{Jr.}}
(\byear{1982}).
\btitle{On the spectral representation of symmetric stable processes}.
\bjournal{J.~Multivariate Anal.}
\bvolume{12}
\bpages{385--401}.
\bid{doi={10.1016/0047-259X(82)90073-2}, issn={0047-259X}, mr={0666013}}
\end{barticle}
\endbibitem

%
%
\bibitem{KatWei91nonsing}
\begin{barticle}[mr]
\bauthor{\bsnm{Katznelson},~\bfnm{Y.}\binits{Y.}} \AND
\bauthor{\bsnm{Weiss},~\bfnm{B.}\binits{B.}}
(\byear{1991}).
\btitle{The classification of nonsingular actions, revisited}.
\bjournal{Ergodic Theory Dynam. Systems}
\bvolume{11}
\bpages{333--348}.
\bid{doi={10.1017/S0143385700006179}, issn={0143-3857}, mr={1116644}}
\end{barticle}
\endbibitem

%
%
\bibitem{Kren70Gen}
\begin{bincollection}[mr]
\bauthor{\bsnm{Krengel},~\bfnm{Ulrich}\binits{U.}}
(\byear{1970}).
\btitle{Transformations without finite invariant measure have finite strong
generators}.
In \bbooktitle{Contributions to {E}rgodic {T}heory and {P}robability ({P}roc.
{C}onf., {O}hio {S}tate {U}niv., {C}olumbus, {O}hio, 1970)}
\bpages{133--157}.
\bpublisher{Springer}, \baddress{Berlin}.
\bid{mr={0269808}}
\end{bincollection}
\endbibitem

%
%
\bibitem{Maruyama70IDproc}
\begin{barticle}[mr]
\bauthor{\bsnm{Maruyama},~\bfnm{G.}\binits{G.}}
(\byear{1970}).
\btitle{Infinitely divisible processes}.
\bjournal{Theory Probab. Appl.}
\bvolume{15}
\bpages{1--22}.
\end{barticle}
\endbibitem

%
%
\bibitem{Ros95strucsta}
\begin{barticle}[mr]
\bauthor{\bsnm{Rosi{\'n}ski},~\bfnm{Jan}\binits{J.}}
(\byear{1995}).
\btitle{On the structure of stationary stable processes}.
\bjournal{Ann. Probab.}
\bvolume{23}
\bpages{1163--1187}.
\bid{issn={0091-1798}, mr={1349166}}
\end{barticle}
\endbibitem

%
%
\bibitem{Ros06Minirep}
\begin{barticle}[mr]
\bauthor{\bsnm{Rosi{\'n}ski},~\bfnm{Jan}\binits{J.}}
(\byear{2006}).
\btitle{Minimal integral representations of stable processes}.
\bjournal{Probab. Math. Statist.}
\bvolume{26}
\bpages{121--142}.
\bid{issn={0208-4147}, mr={2301892}}
\end{barticle}
\endbibitem

%
%
\bibitem{ros96mixsta}
\begin{barticle}[mr]
\bauthor{\bsnm{Rosi{\'n}ski},~\bfnm{Jan}\binits{J.}} \AND
\bauthor{\bsnm{Samorodnitsky},~\bfnm{Gennady}\binits{G.}}
(\byear{1996}).
\btitle{Classes of mixing stable processes}.
\bjournal{Bernoulli}
\bvolume{2}
\bpages{365--377}.
\bid{doi={10.2307/3318419}, issn={1350-7265}, mr={1440274}}
\end{barticle}
\endbibitem

%
%
\bibitem{Roy05these}
\begin{bmisc}[auto:STB|2011-03-03|12:04:44]
\bauthor{\bsnm{Roy},~\bfnm{E.}\binits{E.}}
(\byear{2005}).
\bhowpublished{Mesures de Poisson, infinie divisibilit\'e et
propri\'et\'es ergodiques. Ph.D. thesis, Univ. Paris 6}.
\end{bmisc}
\endbibitem

%
%
\bibitem{Roy06IDstat}
\begin{barticle}[mr]
\bauthor{\bsnm{Roy},~\bfnm{Emmanuel}\binits{E.}}
(\byear{2007}).
\btitle{Ergodic properties of {P}oissonian {ID} processes}.
\bjournal{Ann. Probab.}
\bvolume{35}
\bpages{551--576}.
\bid{doi={10.1214/009117906000000692}, issn={0091-1798}, mr={2308588}}
\end{barticle}
\endbibitem

%
%
\bibitem{Roy07Infinite}
\begin{barticle}[mr]
\bauthor{\bsnm{Roy},~\bfnm{Emmanuel}\binits{E.}}
(\byear{2009}).
\btitle{Poisson suspensions and infinite ergodic theory}.
\bjournal{Ergodic Theory Dynam. Systems}
\bvolume{29}
\bpages{667--683}.
\bid{doi={10.1017/S0143385708080279}, issn={0143-3857}, mr={2486789}}
\end{barticle}
\endbibitem

%
%
\bibitem{Samo04Posnul}
\begin{barticle}[mr]
\bauthor{\bsnm{Samorodnitsky},~\bfnm{Gennady}\binits{G.}}
(\byear{2005}).
\btitle{Null flows, positive flows and the structure of stationary symmetric
stable processes}.
\bjournal{Ann. Probab.}
\bvolume{33}
\bpages{1782--1803}.
\bid{doi={10.1214/009117905000000305}, issn={0091-1798}, mr={2165579}}
\end{barticle}
\endbibitem

%
%
\bibitem{SamTaq94Stable}
\begin{bbook}[mr]
\bauthor{\bsnm{Samorodnitsky},~\bfnm{Gennady}\binits{G.}} \AND
\bauthor{\bsnm{Taqqu},~\bfnm{Murad~S.}\binits{M.~S.}}
(\byear{1994}).
\btitle{Stable Non-{G}aussian Random Processes: Stochastic Models with Infinite Variance}.
\bpublisher{Chapman \& Hall}, \baddress{New York}.
\bid{mr={1280932}}
\end{bbook}
\endbibitem

%
%
\bibitem{Sato99LevPro}
\begin{bbook}[mr]
\bauthor{\bsnm{Sato},~\bfnm{Ken-iti}\binits{K.-i.}}
(\byear{1999}).
\btitle{L\'evy Processes and Infinitely Divisible Distributions}.
\bseries{Cambridge Studies in Advanced Mathematics}
\bvolume{68}.
\bpublisher{Cambridge Univ. Press}, \baddress{Cambridge}.
\bid{mr={1739520}}
\end{bbook}
\endbibitem

%
%
\bibitem{Schmidt77Cocycles}
\begin{bbook}[mr]
\bauthor{\bsnm{Schmidt},~\bfnm{Klaus}\binits{K.}}
(\byear{1977}).
\btitle{Cocycles on Ergodic Transformation Groups}.
\bseries{Macmillan Lectures in Mathematics}
\bvolume{1}.
\bpublisher{Macmillan Company of India, Ltd.}, \baddress{Delhi}.
\bid{mr={0578731}}
\end{bbook}
\endbibitem

%
%
\bibitem{SilThieu95skewent}
\begin{barticle}[mr]
\bauthor{\bsnm{Silva},~\bfnm{Cesar~E.}\binits{C.~E.}} \AND
\bauthor{\bsnm{Thieullen},~\bfnm{Philippe}\binits{P.}}
(\byear{1995}).
\btitle{A skew product entropy for nonsingular transformations}.
\bjournal{J. Lond. Math. Soc. (2)}
\bvolume{52}
\bpages{497--516}.
\bid{issn={0024-6107}, mr={1363816}}
\end{barticle}
\endbibitem

\end{thebibliography}

%


\printaddresses

\end{document}